\newtheorem{theorem}{Theorem}[section]
\newtheorem{lemma}[theorem]{Lemma}
\newtheorem{proposition}[theorem]{Proposition}
\newtheorem{definition}{Definition}[section]
\newtheorem{assumption}{Assumption}
\theoremstyle{definition}
\newtheorem{example}{Example}[section]
\theoremstyle{definition}
\newcounter{texercise}
\newwrite\solout
\def\openoutsol{\immediate\openout\solout\jobname.sol}  \def\writesol#1{\immediate\write\solout{\noexpand\processsol{\thetexercise}{#1}}}% \def\closeoutsol{\immediate\closeout\solout} \def\inputsol{\IfFileExists{\jobname.sol}{\input{\jobname.sol}}{}}
\newcounter{mytheorem}[section] 
\newcommand{\calf}{\mathcal{F}}
\newcommand{\bR}{\mathbb{R}}
\newcommand{\E}{\mathbb{E}}
\numberwithin{equation}{section}
\date{}
\begin{document}
\title{Solving high dimensional FBSDE with deep signature techniques with application to nonlinear options pricing}
\author{Hui Sun\thanks{Citibank, D.E. USA. Disclaimer: the content of the paper is of the author's own research interest and does not represent any of the corporate opinion.}\ , \ Feng bao \thanks{Department of Mathematics, Florida State University}}
\maketitle
\begin{abstract}
We report two methods for solving FBSDEs of path dependent types of high dimensions. Inspired by the work from \cite{haojie}, \cite{weinan1} and \cite{lyons1}, we propose a deep learning framework for solving such problems using path signatures as underlying features. Our two methods (forward/backward) demonstrate comparable/better accuracy and efficiency compared to the state of the art \cite{qifeng1}, \cite{qifeng2} and \cite{come1}. More importantly, leveraging the techniques developed in \cite{lyons5} we are able to solve the problem of high dimension which is an limitation in \cite{qifeng1} and \cite{qifeng2}. We also provide convergence proof for both methods with the proof of the backward methods inspired from \cite{ruimeng} in the Markovian case. 
\end{abstract}
%\tableofcontents
%\listoffigures
%\listoftables
\section{Introduction}
Backward stochastic differential equations (BSDEs) and numerical solutions for solving BSDEs have been extensively studied in the past few decades \cite{Bao_first, BCZ_2011, BCZ_2015, BCZ_2018, Jianfeng, Zhao_BSDE}.
Recently, solving high dimensional PDEs through BSDEs and machine learning attracted a lot of attentions and researchers invested a large amount of effort in designing numerical schemes to solve such problems (c.f.\cite{weinan1, fin1, fin7, come1, BSDE_filter}). The workhorse of almost all such schemes is the combination of usage of BSDEs/FBSDEs and and deep neural networks. The BSDEs/FBSDEs provide a probabilistic interpretations for the PDEs of interest which makes simulation method possible, while deep learning techniques provide the uniform approximation power which most importantly is not sensitive to dimensions. 

The classical decoupled forward-backward SDE takes the following form
\begin{align} \label{fbsde1}
	\begin{cases}
		dX_t = b(t,X_t) dt + \sigma (t,X_t)dW_t \\
		dY_t= -f(t,X_t,Y_t,Z_t) dt + Z_t dW_t \\ 
		X_0=x, Y_T=g(X_T)
	\end{cases}
\end{align}
where  $(b,\sigma): [0,T] \times \bR^{d_1} \rightarrow \bR^{d_1}\times \bR^{d_1 \times d}$  and $f$ is the driver of the BSDE:
$ f: [0,T] \times  \bR^{d_1} \times  \bR^{d_2} \times \bR^{d_2 \times d} \rightarrow \bR^{d_2}$ and $g: \bR^{d_1} \rightarrow \bR^{d_2}$. When the drift and diffusion of the SDE, the driver of the BSDE and the terminal function only dependent on the current state $X_t$, we have a Markovian system. By the non-linear Feynman-Kac formula, the corresponding PDEs  are of the semi-parabolic type. 

On the other hand, of particular interest is the PDE of path dependent types (PPDE). In this case, such PPDEs are related to non-Markovian FBSDEs where the coefficients ($f, g$ etc.) in the system can depend on the entire path of the stochastic process. However, allowing the variables to be path-dependent can lead to challenges both theoretically and numerically.

On the otherhand, Solving the PPDE/FBSDE is of great interest since they arise naturally in the various financial context, see \cite{standard_1},\cite{standard_2},\cite{standard_3},\cite{standard_4}. Recently, some attentions are given to finding the numerical solutions of the PPDEs \cite{ppde1} where LSTM together with path signatures are used and \cite{ppde2} where the LSTM method is used with the deep Galerkin method. On the other hand, we note that the algorithm in \cite{ppde2} takes a long time to converge.  In the meantime, some option pricing problems arising from the Volterra SDEs also lead to path-dependent PDEs. Numerical algorithms are considered to solve such problems \cite{ppde3}, \cite{ppde4}.

In this work, inspired by the work in \cite{lyons1} \cite{lyons2} and the theoretical findings in \cite{lyons3}, we propose to introduce the path signature as the underlying features in replacement of the original paths of the SDE. The benefit, as pointed out in both \cite{lyons1} and \cite{qifeng1}, is that financial timeseries data is usually of high frequencies and path signature does not lose information when truncated at discrete time. That being said, no down sampling is needed for time series with large number of steps.

We noted that in \cite{lyons1}, it is reported the RNN structure together with the path signatures can very well learn the solution of an SDE given only the simulated Brownian paths. We observe in the literature \cite{qifeng1} that the exact same idea was adopted to solve non-markovian FBSDEs. More specifically, SDE paths are first simulated on a fine meshgrid. Then truncated path signatures/log-signatures are generated on segments of the fine meshgrid which are defined through a coarse meshgrid. Then signatures on each segments will be used as features. Together with the RNN structure, the function $Z_{t_n}$ in the BSDE will be approximated recursively at discrete time locations.  

In this work, we report two new methods (forward and backward methods) which are also based on using the path signature similar to \cite{qifeng1} and \cite{qifeng2} as features. The contributions of this paper can be summarized as follows:  
\begin{itemize}
    \item Two neural network structures are proposed with proof of convergence  given and numerical examples provided. We use the framework same as \cite{weinan1}. That is, instead of using RNNs for the function approximation purpose, we use individual neural networks to learn the function $Z_{t_i}$ at each discrete time, where the argument of the function will be the signature of the entire path $X_t$ up to time $t_i$. Since individual neural networks are used, the input will be the truncated signatures of paths staring from time 0. 
    \item The method in \cite{qifeng1} can only be used to solve FBSDEs related to semi-linear parabolic PDEs of the path dependent type. We also propose a backward algorithm that can solve the reflected FSBDE (optimal stopping types) which are directly related to the pricing of American options. The algorithm is inspired by the work in \cite{haojie}. 
   \item  We solve the open computational problem stated in \cite{qifeng1}. Both the work in \cite{qifeng1} and \cite{qifeng2} has the limitation of not being able to solve problems of high dimensions: the highest dimension problem they can solve is of 20 dimension. We propose a method that uses the technique from \cite{lyons5}. More specifically, we add an embedding layer prior to finding the signature of a path. The embedding layer plays the role of dimension reduction and information extraction. We remark that the data-stream structure is still preserved after the original data stream is processed.  
\end{itemize}

The result of the paper is structured as follows. In section 2 we give a brief introduction of the signature method and state our main algorithms for both the forward and backward methodologies. We comment that a different backward algorithm was also considered by \cite{qifeng2}.
Assumptions made in the paper are given in section 3. Convergence analysis for both the markovian and non-markovian FBSDEs are provided in Section 4. The proof for the backward algorithm is similar to that in \cite{ruimeng} with minor changes, we provide it here for completeness. Lastly, in section 5 we provide numerical examples and compare our results to those in the literature. The code for all the test can be found in Github \url{https://github.com/Huisun317/path-dependent-FBSDE-/tree/main}

\section{Algorithm and Notations}
\subsection{A quick introduction to Signature methods}
In this section, we give a brief introduction to the path signature. We first provide definition of the signature of a path and then present a few examples which is then followed by the log-signature. We comment that the dimension of signature of a path increase exponentially with the dimension of the underlying state process. Log-signature on the other hand, will effectively reduce the dimension without losing the information content of the path signature. We then introduce the Chen's identity which is needed for the efficiency of numerical computation of signatures. Lastly, we introduce the Universal nonlinearity of proposition which shows the approximation power of functions using signature as features to the path dependent functions.  

\begin{definition}
    \textbf{(Definition A.1.\cite{lyons5})} Let $a,b \in \bR$ and $X=(X^1,..., X^d):[a,b] \rightarrow \bR^d$ be a continuous piecewise smooth path. The signature of $X$ is then defined as the collection of iterated integrals. 
    \begin{align}
        Sig(X)_{a,b}=&\Big(\underset{{a <t_1 < ...<t_k < b}}{\int...\int} dX_{t_1} \otimes ...\otimes dX_{t_k} \Big)_{k \geq 0} \nonumber \\ 
        & =\Bigg( \Big( \underset{{a <t_1 < ...<t_k < b}}{\int...\int} dX^{i_1}_{t_1} \otimes ...\otimes dX^{i_k}_{t_k} \Big)_{1 \leq i_1 , ..., i_k \leq d} \Bigg)_{k\geq 0}
    \end{align}
\end{definition}
We note that the truncated signature is defined as 
\begin{align}
    Sig^m(X)_{a,b}=&\Big(\underset{{a <t_1 < ...<t_k < b}}{\int...\int} dX_{t_1} \otimes ...\otimes dX_{t_k} \Big)_{0 \leq k \leq m}
\end{align}
The signature can be expressed in the formal power series
\begin{align}
    S(X)_{a,b} = \sum^{\infty}_{k=0} \sum_{i_1,...,i_k \in \lbrace 1,...,d \rbrace} S(X)^{i_1,...,i_k}_{a,b} e_{i_1}...e_{i_k}
\end{align}
We provide simple examples for illustration of path signatures. For more examples and theoretical results see \cite{lyons1} \cite{lyons2} \cite{lyons3} \cite{lyons4} and \cite{lyons5}. 
\begin{example}
      We take $d=1$ then we have 
      \begin{align}
          Sig^1(X)_{a,b}&= X_b -X_a \nonumber \\
          Sig^2(X)_{a,b}&= \frac{( X_b -X_a)^2}{2! }\nonumber \\
          Sig^3(X)_{a,b}&= \frac{( X_b -X_a)^3}{3! }\nonumber \\
          &... \nonumber
      \end{align}
Then the signature of path $X: [a,b] \rightarrow \bR^d$ is given by 
\begin{align}
    (Sig^1(X)_{a,b}, \ Sig^2(X)_{a,b}, \ Sig^3(X)_{a,b},...) \nonumber
\end{align}
\end{example}
  
\begin{example}
      We take $d=2$ and for $X_t=(X_t^1, X_t^2)$
      \begin{align}
          Sig^1(X)_{a,b}&= (\int^b_a dX^1_{t_1}, \int^b_a dX^2_{t_2} )\nonumber \\
          Sig^2(X)_{a,b}&= (\int_a^{b}\int^{t_2}_a dX^1_{t_1} dX^1_{t_2} ,\int_a^{b}\int^{t_2}_a dX^1_{t_1} dX^2_{t_2},  \int_a^{b}\int^{t_2}_a dX^2_{t_1} dX^2_{t_2},\int_a^{b}\int^{t_2}_a dX^2_{t_1} dX^1_{t_2})   \nonumber \\
          & ... 
      \end{align}
      Then the signature of path $X: [a,b] \rightarrow \bR^d$ is given by 
\begin{align}
    (Sig^1(X)_{a,b}, \ Sig^2(X)_{a,b},...) \nonumber
\end{align}
\end{example}

We comment that there is a transformation of the the path signature called the log signature which corresponds to taking the formal logarithm of the signature in the algebra of formal power series. For a power series $x$ where  
\begin{align}
     x= \sum^{\infty}_{k=0} \sum_{i_1,...,i_k \in \lbrace 1,...,d \rbrace} \lambda_{i_1,...,i_k} e_{i_1}...e_{i_k}
\end{align}
for $\lambda_0 >0$, the logarithm is defined as 
\begin{align}
    \log(x) = \log(\lambda_0) + \sum_{n \geq 1} \frac{(-1)^n}{n}(1-\frac{x}{\lambda_0})^{\otimes n}
\end{align}
\begin{definition}
    (Log signature \textbf{Definition 6 \cite{lyons4}}). For a path $X:[a,b] \rightarrow \bR^d$, the log signature of $X$ is defined as the formal power series $\log S(X)_{a,b}$. 
\end{definition}
\begin{definition}
    (Concatenation \textbf{Definition 4 \cite{lyons4}}) For two paths $X: [a,b] \rightarrow \bR^d$ and $Y: [b,c] \rightarrow \bR^d$, we define the concatenation as the path $X*Y:[a,c] \rightarrow \bR^d$ for which 
    \begin{align}
    (X*Y)_t=
        \begin{cases}
             X_t , \ \ t \in [a,b] \\ 
             X_b +(Y_t - Y_b), \ \ t \in [b,c] \\ 
        \end{cases}
    \end{align}
\end{definition}
The Chen's identity can provide a simpler computational method for two paths that are concatenated. This theorem will be helpful if one needs to consider signature of different paths that have the same origin and large amount of overlaps. 
\begin{theorem}
    (Chen's Identity \textbf{Theorem 2 \cite{lyons4}}). Let $X: [a,b] \rightarrow \bR^d$ and $Y: [b,c] \rightarrow \bR^d$ be two paths. Then 
    \begin{align}
        S(X*Y)_{a,c} = S(X)_{a,b}\otimes S(Y)_{b,c}
    \end{align}
\end{theorem}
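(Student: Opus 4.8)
The plan is to verify the identity one coordinate of the tensor algebra at a time. Fix a level $k \geq 0$ and a word $(i_1,\dots,i_k) \in \{1,\dots,d\}^k$; by the multiplication rule in the tensor algebra of formal power series, the $(i_1,\dots,i_k)$--coefficient of $S(X)_{a,b}\otimes S(Y)_{b,c}$ is $\sum_{j=0}^{k} S(X)^{i_1,\dots,i_j}_{a,b}\, S(Y)^{i_{j+1},\dots,i_k}_{b,c}$, where the empty-word coefficient of either factor equals $1$. Hence Chen's identity is equivalent to the family of scalar identities
\begin{align}
\int\limits_{a<t_1<\dots<t_k<c}\!\! d(X*Y)^{i_1}_{t_1}\cdots d(X*Y)^{i_k}_{t_k}
\;=\; \sum_{j=0}^{k}\Bigg(\;\int\limits_{a<s_1<\dots<s_j<b}\!\! dX^{i_1}_{s_1}\cdots dX^{i_j}_{s_j}\Bigg)\Bigg(\;\int\limits_{b<r_1<\dots<r_{k-j}<c}\!\! dY^{i_{j+1}}_{r_1}\cdots dY^{i_k}_{r_{k-j}}\Bigg), \nonumber
\end{align}
one for each $k$ and each word.

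First I would record two elementary consequences of the definition of concatenation above: on $[a,b]$ we have $(X*Y)_t = X_t$, while on $[b,c]$ we have $(X*Y)_t = X_b + (Y_t-Y_b)$; consequently $d(X*Y)_t = dX_t$ for $t\in[a,b]$ and $d(X*Y)_t = dY_t$ for $t\in[b,c]$, and since $X*Y$ is continuous and piecewise smooth (the only new non-smooth point being $t=b$, which is a null set) all the iterated integrals in sight are well defined. Next I would split the ordered simplex $\{a<t_1<\dots<t_k<c\}$ according to how many of the times lie in $[a,b]$: for each $j\in\{0,\dots,k\}$ this produces the piece $\{a<t_1<\dots<t_j\le b< t_{j+1}<\dots<t_k<c\}$. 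On that piece the integrand factors as a function of $(t_1,\dots,t_j)$ times a function of $(t_{j+1},\dots,t_k)$ --- using the increment identities above --- and the region itself is a product of two ordered simplices, so Fubini turns the integral over that piece into exactly the $j$-th summand on the right-hand side. Summing over $j$ gives the displayed identity and therefore the theorem.

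An alternative route that I would keep in reserve, because it generalizes more cleanly to the rough-path setting, is an ODE/uniqueness argument: for fixed $m$ the truncated signature $t\mapsto Sig^m(X)_{a,t}$ is the unique solution of the linear controlled differential equation $dS_t = S_t\otimes dX_t$ with $S_a = \mathbf 1$, which is well posed because it is triangular level by level (the degree-$k$ component is driven only by the degree-$(k-1)$ component). Holding the first path fixed, one checks that $t\mapsto Sig^m(X)_{a,b}\otimes Sig^m(Y)_{b,t}$ solves the same equation on $[b,c]$ driven by $X*Y$, with value $Sig^m(X)_{a,b} = Sig^m(X*Y)_{a,b}$ at $t=b$; uniqueness then forces it to equal $Sig^m(X*Y)_{a,t}$, and evaluating at $t=c$ and letting $m\to\infty$ closes the argument.

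The part I expect to require the most care is bookkeeping rather than analysis: matching the simplex decomposition to the tensor-algebra product exactly, including the degenerate end cases $j=0$ and $j=k$ (which correspond to the scalar $1$ sitting in $S(X)_{a,b}$ or in $S(Y)_{b,c}$), and making sure the manipulation is carried out uniformly in the level $k$ so that the conclusion holds for the entire formal power series rather than merely for each fixed truncation. Because everything is finite-dimensional at each level and the piecewise-smoothness hypothesis makes the use of Fubini legitimate, no genuine convergence difficulty arises; the real work is in the indexing.
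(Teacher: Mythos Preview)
Your argument is correct and is in fact the standard way Chen's identity is established for piecewise smooth paths: decompose the ordered simplex $\{a<t_1<\dots<t_k<c\}$ according to the index $j$ at which the times cross $b$, use that $d(X*Y)_t$ equals $dX_t$ on $[a,b]$ and $dY_t$ on $[b,c]$, and apply Fubini on each product piece. The alternative ODE/uniqueness route you sketch is also sound.

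However, the paper does not give a proof of this statement at all. Chen's identity is stated in Section~2.1 purely as a quoted background result from \cite{lyons4} (``Theorem 2''), alongside the definitions of signature, log-signature, and concatenation; no argument is supplied, and none is needed for the purposes of the paper, which only invokes the identity to justify the computational shortcut of building $Sig(X_{0:n\Delta t})$ incrementally from $Sig(X_{0:(n-1)\Delta t})$ and $Sig(X_{(n-1)\Delta t:n\Delta t})$. So there is nothing to compare your proposal against: you have written a complete and correct proof of a theorem that the paper simply cites.
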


We also give the definition of the time augmented path. This is needed because in the numerical implementation, we will always use this augmented process. 
\begin{definition}
    Given a path $X:[a,b] \rightarrow \bR^d$, we define the corresponding time-augmented path by $\hat{X}=(t,X_t)$ which is a path in $\bR^{d+1}$. 
\end{definition}
We provide the following proposition on Universal non-linearity as it gives the theoretical grounding/motivation for us to use deep neural network to approximate the functional $F$. Basically, what the proposition says is that the function of the path is approximately linear on the signature. In some sense, the signature can be treated as a `universal nonlinearity' on paths.
\begin{proposition}
(Universal nonlinearity \textbf{Proposition A.6 \cite{lyons5}})
    Let $F$ be a real-valued continuous function on continuous piecewise smooth paths in $\bR^d$ and let $\mathcal{K}$ be a compact set of such paths. Furthermore assume that $X_0$ for all $X \in \mathcal{K}$. Let $\epsilon >0$. Then there exists a linear functional $L$ such that for all $X \in \mathcal{K}$, 
    \begin{align}
        |F(X)-L(Sig(\hat{X})| < \epsilon
    \end{align}
\end{proposition}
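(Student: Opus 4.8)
The plan is to obtain the proposition from the Stone--Weierstrass theorem. Equip $\mathcal{K}$ with the topology under which it is compact and $F$ is continuous (for piecewise smooth / bounded-variation paths one may take, e.g., the $1$-variation topology, in which the truncated signature maps are also continuous), and assume, as the hypothesis intends, that all $X \in \mathcal{K}$ share a common initial point. Write $\Phi(X) := Sig(\hat{X})$ for the signature of the time-augmented path, and set
\[
\mathcal{A} := \big\{\, X \mapsto L(\Phi(X)) \;:\; L \text{ a linear functional on the tensor algebra involving only finitely many levels} \,\big\} \subseteq C(\mathcal{K}).
\]
It suffices to show that $\mathcal{A}$ is a subalgebra of $C(\mathcal{K})$ that contains the constants and separates points; Stone--Weierstrass then gives that $\mathcal{A}$ is uniformly dense in $C(\mathcal{K})$, and picking an element of $\mathcal{A}$ within $\epsilon$ of $F$ yields the desired $L$. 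Note that such an $L$ automatically depends on only finitely many signature levels, i.e. acts on $Sig^m(\hat{X})$ for some $m$ --- which is precisely what justifies using truncated signatures as features.

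First, $\mathcal{A}$ is a linear subspace, since a linear combination of linear functionals is a linear functional. It contains all constants, because the projection onto the degree-zero tensor level sends every signature to $1$, so its scalar multiples realize every constant function. The crucial point is that $\mathcal{A}$ is closed under multiplication: for linear functionals $L_1, L_2$ there is a linear functional $L_{12}$ (the \emph{shuffle product} of $L_1$ and $L_2$) with
\[
L_1(\Phi(X)) \cdot L_2(\Phi(X)) \;=\; L_{12}\big(\Phi(X)\big) \qquad \text{for every path } X .
\]
This is the classical shuffle identity for signatures: a product of two iterated integrals over simplices is, by Fubini / integration by parts, a sum of iterated integrals of higher order, and the orders that appear are exactly those produced by the shuffle product of the corresponding words; moreover if $L_1, L_2$ are supported on levels $\le m_1, m_2$ then $L_{12}$ is supported on levels $\le m_1 + m_2$, so $\mathcal{A}$ stays within the stated class. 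I would verify this by induction on total degree, or simply cite it from \cite{lyons4}.

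Next, $\mathcal{A}$ separates points of $\mathcal{K}$. If $X \neq Y$, then because $\hat{X}$ and $\hat{Y}$ are strictly increasing in their time coordinate, neither is tree-like, and the signature uniqueness theorem (Hambly--Lyons; for time-augmented paths the tree-like equivalence collapses so that $\Phi$ is injective) gives $\Phi(X) \neq \Phi(Y)$. Hence some tensor coordinate differs, and the corresponding coordinate functional $L$ separates $X$ from $Y$. All three hypotheses of Stone--Weierstrass are met, so $\mathcal{A}$ is dense in $C(\mathcal{K})$; compactness of $\mathcal{K}$ upgrades the pointwise approximation to the uniform bound $\sup_{X \in \mathcal{K}} |F(X) - L(Sig(\hat{X}))| < \epsilon$, which is the claim.

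The main obstacle is the algebra property --- establishing the shuffle identity $L_1(\Phi(X)) L_2(\Phi(X)) = L_{12}(\Phi(X))$ with proper bookkeeping of truncation levels --- together with the appeal to the signature uniqueness theorem, which is itself a deep result and which one would cite rather than reprove. Everything else (that $\mathcal{A}$ is a linear space containing constants, the continuity of $\Phi$ and $F$ in the chosen topology, and the final Stone--Weierstrass invocation together with the compactness-based passage to a uniform bound) is routine.
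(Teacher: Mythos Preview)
Your Stone--Weierstrass argument is correct and is in fact the standard proof of this result; the shuffle identity gives the algebra property, time-augmentation plus signature uniqueness gives point separation, and the rest is routine. Note, however, that the paper does not actually prove this proposition: it is stated as Proposition A.6 of \cite{lyons5} and merely quoted here as background for the algorithms, so there is no ``paper's own proof'' to compare against --- your sketch essentially reproduces the argument one would find in the cited reference.
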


\subsection{Algorithms}
We use the following numerical scheme to approximate the solution of the forward SDE for \eqref{fbsde1}: 
\begin{align} \label{X_tilde}
    X_{\tilde t_{i+1}}^{\tilde N} = X_{\tilde t_{i}}^{\tilde N} +b(\tilde t_{i},X_{\tilde t_{i}}^{\tilde N}) h + \sigma (\tilde t_{i},X_{\tilde t_{i}}^{\tilde N}) \Delta W^{\tilde N}_{\tilde{t}_i}
\end{align}
where use tilde to denote that we have a fine mesh, and $h=T/\tilde{N}$.
Based on the fine meshgrid, we also have a coarse mesh grid that has total $N$ segment where we define $\Delta t: = \frac{T}{N}=hM$ where $M$ is the number of fine grids in each segment. In this case, it is then clear that $N=T/hM$.  Note that $ \Delta W^{\tilde N}_{\tilde{t}_i}= W^{\tilde N}_{\tilde{t}_{i+1}}-W^{\tilde N}_{\tilde{t}_i}$. Notice that one can  define $\lbrace X^N_{t_n} \rbrace_{1 \leq n \leq N}$ by naturally taking snapshots of $\lbrace X^{\tilde N}_{\tilde t_i} \rbrace_{0\leq i \leq \tilde{N}}$ at $i=0, M, 2M, ...,MN$. $ \lbrace W^N_n \rbrace_{0 \leq n \leq N-1}$ can be defined similarly. The motivation for defining such two solutions with total steps of size $\tilde{N}$ and $N$ is that the financial 

To approximate $Z_{t}$ we use a sequence of neural networks defined on this courser meshgrid $\lbrace 0, \frac{T}{N}, ,... \frac{nT}{N}, ..., T \rbrace$ where we denote the approximator as $Z^{N, sig}_{n-1}$. Each neural network will take the truncated signature of the path $X_t$ of level $m$ as input. We denote the truncated signature of the process $X$ up until time $t$ as $\pi_{m}(Sig(X_{0: t}))$, and that of the approximated path according to \eqref{X_tilde} as
\begin{align}
    \pi_{m}(Sig(X^{\tilde N}_{0:\tilde t_{i}})),  \ \ \tilde t_{i}=t_0, t_1, .., t_n , ..., t_{N-1}
\end{align} 

In our first approach, the sample-wise solution of the BSDE is then approximated by the following discretization. 
\begin{align}\label{bsde_alg1}
    Y^{N, sig}_{n}=Y^{N, sig}_{n-1} -f_{n-1}(X^N_{n-1},Y^{N, sig}_{n-1},Z^{N, sig}_{n-1}) \Delta t + Z^{N, sig}_{n-1} \Delta W^N_{n}
\end{align}
where we use the short hand notation $f_{n}(\cdot, \cdot, \cdot):= f(t_{n},\cdot, \cdot, \cdot)$, and use $n$ in place for $t_n$.We remark that here $X^N_{n}:=X^N_{t_n}$ is essentially $X_{Mhn}^{\tilde N}$. In later analysis, we will also use $(X^N_{t_n},Y^{N, sig}_{t_n},Z^{N, sig}_{t_n})$ in place of $(X^N_{n},Y^{N, sig}_{n},Z^{N, sig}_{n})$ to make it clearly that those are the approximation at time $t_n$. This should not cause any confusion since the upper index $N$ denotes that those are quantities obtained through numerical scheme.

We did not specify the terminal condition/discretization because in our first approach, the sample $Y$ propagates forward: we initialize a batch of the $Y_0$ and propagate through \eqref{bsde_alg1} to match the terminal condition. 

On the other hand, we note that such forward method cannot deal with the optimal stopping problems which typically requires a backward scheme. As such, inspired by the algorithm in \cite{haojie}, we also propose a backward algorithm. But in this case, the process $Z_t$ takes the path of the process $X_t$. 
\begin{align}\label{bsde_alg2}
    Y^{N, sig}_{n-1}=Y^{N, sig}_{n} + f_{n-1}(X^N_{n-1},Y^{N, sig}_{n},Z^{N, sig}_{n-1}) \Delta t + Z^{N, sig}_{n-1} \Delta W^N_{n}
\end{align}
where $Z_n^{N, Sig} := Z_n^{\theta_n}(\pi_m(Sig(\tilde X_{0: n\Delta t}^{j,\tilde N})))$ and $\lbrace Z_n^{\theta_n} \rbrace_{i=1,...N-1}$. is the feed-forward neural network estimator. 

We point out that one of the motivation for the backward algorithm is it can be extended to solve the optimal stopping problem whose PDE counterpart in the Markovian case is of the following form
\begin{align}
    \begin{cases}
        \min \lbrace -\partial_t u - \mathcal{L}u - f(t, x, u, \sigma^T D_x u), u -g \rbrace, \ \ t \in [0,T), x \in \bR^d \\ 
        u(T,x)=g(x), x \in \bR^d\\
    \end{cases}
\end{align}

This variational inequality can be linked to RBSDE (Reflected BSDE).  
\begin{align}{\label{reflected_BSDE}}
    X_t & =x+\int^t_0 b(s,X_s)ds  + \int^t_0 \sigma(s,X_s) dW_s \nonumber \\ 
    Y_t&=g(X_T)+\int^T_t f(s,X_s,Y_s,Z_s) ds -\int ^T_t Z_s dW_s + K_T -K_t \nonumber \\ 
    Y_t &\geq g(X_t), \ 0\leq t \leq T
\end{align}
where $K$ is an adapted non-decreasing process satisfying 
\begin{align*}
    \int^T_0 Y_t-g(X_t) dK_t = 0
\end{align*}
Accordingly, the numerical scheme in general takes the following form
\begin{align}\label{reflected_num}
\begin{cases}
    Y^{N,Sig}_T = g(X^{\tilde N}_{0:T})  \\ 
    Y^{N, sig}_{n-1} =Y^{N, sig}_{n} + f_{n-1}(X^N_{n-1},Y^{N, sig}_{n},Z^{N, sig}_{n-1}) \Delta t - Z^{N, sig}_{n-1} \Delta W^N_{n} \\ 
    Y^{N, sig}_{n-1} =\max(g(X^N_{n-1}), Y^{N, sig}_{n-1}) 
\end{cases}
\end{align}
We state the algorithms for both the forward and backward Deep Signature algorithm in \textbf{Algorithm }\ref{algorithm 1}. 

Finally, we note that due to the fact that the dimension of the truncated signatures increase exponentially with the dimension of the underlying paths, solving path-dependent FBSDEs of very high dimensions become impractical when the state process is of very high dimension. (The generated signature if of dimension $(d^{m+1}/(d-1)$ where $d$ is the dimension of the state process and $m$ is the level of the truncated depth of the signature).

The idea is to pass the underlying paths say $\lbrace X_{t_n} \rbrace_{0 \leq n \leq N-1}$ where $X_{t_n} \in \bR^d$ through an embedding layer while still keeping the sequence structure. The embedded result then will be used to generate signatures which will be passed through a sequence of individual neural networks or RNNs \cite{qifeng1} to approximate $Z_{t_n}$. This will require error back-propagation of the signatures since the parameters of the embedded layers needs to be trained. As such, we will use the Signatory library in Pytorch since it provides such calculation \cite{lyons5}.

\begin{algorithm}
\caption{Algorithm for forward deep FBSDE Signature method}\label{algorithm 1}
\begin{algorithmic}[1]
\REQUIRE Initializing the following terms
\begin{itemize}
    \item $Y_0$ and a margin $\epsilon$, $epoch=0$ total number of Iterations Epoch. 
    \item Feedforward Neural network $\lbrace Z_n^{\theta_n} \rbrace_{i=0,...N-1}$. 
    \item Time discretization $h$ which determines the total number of temporal discretization $\tilde N$  and the total number of segment $N$ which determines coarser grid size $\Delta t$. 
\end{itemize}
\WHILE{$LOSS(Y_0) \geq \epsilon$ or $Iter > Epoch$}
    \STATE{ 
    \begin{itemize}
        \item Randomly sample batch $B$ of Brownian paths ($\tilde W_{0},\tilde W_{h},..., \tilde W_{nMh}, ... \tilde W_{N \tilde h} $) and accordingly the state process ($\tilde X^{j,\tilde N}_{0},...,\tilde X^{j,\tilde N}_{h}, \tilde X^{j,\tilde N}_{nMh}, ... \tilde X^{j,\tilde N}_{\tilde N \tilde h}$)
        \item Create truncated signature (log-signature) 
        $$\Big(\pi_m(Sig(\tilde X_{0}^{j,\tilde N})),\pi_m(Sig(\tilde X_{0:\Delta t}^{j,\tilde N})),...,\pi_m(Sig(\tilde X_{0: n\Delta t}^{j,\tilde N})),..., \pi_m(Sig(\tilde X_{0:N\Delta t}^{j,\tilde N}))\Big)_{0 \leq j \leq B}$$
        \item Compute $Z_n^{j, N, Sig} = Z_n^{\theta_n}(\pi_m(Sig(\tilde X_{0: n\Delta t}^{j,\tilde N})))$
        \item For each path $\lbrace \tilde X^{j,\tilde N}_{0:\tilde N} \rbrace_{1 \leq j \leq B}$, find $Y_n^{j,N, Sig}$ iteratively using the Euler scheme \eqref{bsde_alg1}. 
    \end{itemize}}
    \STATE{Compute the loss by matching the terminal conditions
    $$Loss(Y_0)= \frac{1}{B} \sum^B_{j=1} (Y_T^{j,N,Sig}-g(X_T^{j,\tilde N, Sig}))^2$$}
    
    \STATE{Update the parameters through gradient descent. $Iter=Iter+1$}
\ENDWHILE
\RETURN $Y_0$
\end{algorithmic}
\end{algorithm}

\begin{algorithm}
\caption{Algorithm for Backward deep FBSDE Signature method}\label{algorithm 2}
\begin{algorithmic}[2]
\REQUIRE \STATE{Initializing the following terms}
\begin{itemize}
    \item A margin $\epsilon$, $epoch=0$ total number of Iterations Epoch. 
    \item Feedforward Neural network $\lbrace Z_n^{\theta_n} \rbrace_{i=0,...N-1}$. 
    \item Time discretization $h$ which determines the total number of temporal discretization $\tilde N$  and the total number of segment $N$ which determines coarser grid size $\Delta t$. 
\end{itemize}
\WHILE{$LOSS(Y_0,Z_0) \geq \epsilon$ or $Iter > Epoch$}
    \STATE{ 
    \begin{itemize}
        \item Randomly sample batch $B$ of Brownian paths ($\tilde W_{0},\tilde W_{h},..., \tilde W_{nMh}, ... \tilde W_{\tilde N \tilde h} $) and accordingly the state process ($\tilde X^{j,\tilde N}_{0},\tilde X^{j,\tilde N}_{h},..., \tilde X^{j,\tilde N}_{nMh}, ... \tilde X^{j,\tilde N}_{\tilde N \tilde h}$)
        \item Create truncated signature (log-signature) 
        $$\Big(\pi_m(Sig(\tilde X_{0}^{j,\tilde N})),\pi_m(Sig(\tilde X_{0:\Delta t}^{j,\tilde N})),...,\pi_m(Sig(\tilde X_{0: n\Delta t}^{j,\tilde N})),..., \pi_m(Sig(\tilde X_{0:N\Delta t}^{j,\tilde N}))\Big)_{0 \leq j \leq B}$$
        \item Compute $Z_n^{j, N, Sig} = Z_n^{\theta_n}(\pi_m(Sig(\tilde X_{0: n\Delta t}^{j,\tilde N})))$
        \item For each path $\lbrace \tilde X^{j,\tilde N}_{0:\tilde N} \rbrace_{1 \leq j \leq B}$, find $Y_n^{j,N, Sig}$ iteratively using the Euler scheme \eqref{bsde_alg2}. 
    \end{itemize}}
    \STATE{Compute the loss by finding the variance the batch $\lbrace Y^{j, N, Sig}_0\rbrace_{1 \leq j\leq B}$
    $$Loss(Y^{N, Sig}_0)=\text{Var}(Y^{N, Sig}_0)$$
    }
    \STATE{Update the parameters through gradient descent.}
\ENDWHILE
\RETURN $Y_0$
\end{algorithmic}
\end{algorithm}

\section{Proof of convergence}
\subsection{Assumptions}
\begin{assumption}
   Let $b, \sigma, f, g$ be deterministic functions such that: 
   \begin{enumerate}
       \item $b(\cdot, 0),\sigma(\cdot, 0), f(\cdot, 0, 0, 0)$ and $g(0)$ be uniformly bounded. 
       \item $b,\sigma, f, g$ are uniformly lipschitz continuous in $(x,y,z)$ with lipschitz constant $L$.
       \item $b,\sigma, f$ are uniformly Hölder-$\frac{1}{2}$ continuous in $t$ with Hölder constant $L$.
       \item $f$ has slow and at most linear growth in $y$ and $z$:
       \begin{align*}
           |f(t,x,y_1,z_1)-f(t,x,y_2,z_2)|^2 \leq K_y |y_1-y_2|^2+K_z|z_1-z_2|^2
       \end{align*}
       with $K_y$ and $K_z$ sufficiently small.
   \end{enumerate}
\end{assumption}
We comment that those assumptions are standard for the existence of the BSDE except for \textit{4} which is needed for the proof for the backward method. 

We want to identify the solution of the BSDE with the solution of a Semi-linear PDE according to the non-linear Feynman-Kac formula. Hence, we make the following assumption.
\begin{assumption}
Assume that the following PDE has a classical solution $u \in C^{1,2}([0,T) \times \bR^{d_1}; \bR^{d_2})$
    \begin{align}
		\begin{cases}
			\partial_t u(t,x) + \mathcal{L}u(t,x) + f(t,x,u,\sigma^T\partial_x u)=0, \ (t,x)\in [0,T) \times \bR^d \\ 
			u(T,x) =g(x) , \ x \in \bR^d	
		\end{cases}
	\end{align}
\end{assumption}
Then under such assumption, one can write
\begin{align*}
    Y_t=u(t,X_t), \ \ Z_t = \sigma^T(X_t) D_x u(t,X_t)
\end{align*}

In all the analysis that follows, we will take $d=d_1=d_2=1$ for simplicity. 
We introduce the standing assumptions below. 

\subsection{Markovian Case}
Some simple estimates are given below 
For simplicity, we use the following short hand notations, and $C$ denotes a generic constant which may differ from line to line. 
\begin{enumerate}[i.]
    \item $\Delta f_n = f(t , X_{t}, Y_{t}, Z_{t}) - f_n(X_{t_n}^{N},Y^{N,Sig}_{t_n}, Z^{N,Sig}_{t_n})$; $ \ \ \Delta f^N_n = f(t, X_{t}, Y_{t}, Z_{t}) - f_n(X_{t_n},Y_{t_n}, Z_{t_n})$; 
    \item $\Delta X_n= X_{t}-X_{t_n}^{N}$; $\ \ \Delta X^N_n= X_{t}-X_{t_n}$
    \item $\Delta Y_n= Y_{t}-Y^{N,Sig}_{t_n}$; $\ \ \Delta Y^N_n= Y_{t}-Y_{t_n}$
    \item $\Delta Z_n= Z_{t}-Z^{N,Sig}_{t_n}$; $\ \ \Delta Z^N_n= Z_{t}-Z_{t_n}$
    \item $\Delta g_n=g(X_{t})-g(X_{t_n}^{N})$; $\ \ \Delta g^N_n=g(X_{t})-g(X_{t_n})$
\end{enumerate}

The following standard result from \cite{Jianfeng} is needed for the proof later
\begin{theorem}\label{standard1}
    Let \textit{Assumption 1} holds and assume $h$ is small. Then
    \begin{align}
        \max_{0 \leq n \leq N} \E[ \sup_{t_n \leq t \leq t_{n+1}}|Y_t-Y_{t_n}|^2] +\sum^{N-1}_{n=0} \E[\int^{t_{n+1}}_{t_n} |Z_t - Z_{t_n}|^2 dt ] \leq C (1+|x|^2) h 
    \end{align}
\end{theorem}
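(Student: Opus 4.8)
The plan is to prove the two regularity estimates separately, both via the standard BSDE a priori estimates combined with the path regularity of the forward SDE. First I would establish the forward estimate: under Assumption 1, the SDE coefficients $b,\sigma$ are Lipschitz in $x$ with linear growth and H\"older-$\tfrac12$ in $t$, so the classical moment bound $\E[\sup_{0\le t\le T}|X_t|^2]\le C(1+|x|^2)$ holds, and from the SDE itself, for $t\in[t_n,t_{n+1}]$,
\begin{align*}
  \E\Big[\sup_{t_n\le t\le t_{n+1}}|X_t-X_{t_n}|^2\Big]\le C\,\E\Big[\Big(\int_{t_n}^{t_{n+1}}|b(s,X_s)|\,ds\Big)^2 + \sup_{t_n\le t\le t_{n+1}}\Big|\int_{t_n}^{t}\sigma(s,X_s)\,dW_s\Big|^2\Big]\le C(1+|x|^2)h,
\end{align*}
using Cauchy--Schwarz on the drift term and Doob's maximal inequality plus the It\^o isometry on the martingale term. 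This handles $\sup_{t_n\le t\le t_{n+1}}|X_t-X_{t_n}|^2$ uniformly in $n$.

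Next, for the $Y$ part, I would use the representation $Y_t=u(t,X_t)$ from Assumption 2 when convenient, or argue directly from the BSDE. Writing $Y_t-Y_{t_n}=-\int_{t_n}^{t}f(s,X_s,Y_s,Z_s)\,ds+\int_{t_n}^{t}Z_s\,dW_s$, one applies Doob's inequality and the It\^o isometry together with the standard a priori bound $\E[\sup_{0\le t\le T}|Y_t|^2]+\E[\int_0^T|Z_t|^2dt]\le C(1+|x|^2)$ (Assumption 1, parts 1--2 give linear growth of $f$, hence existence and these moment bounds). This yields $\E[\sup_{t_n\le t\le t_{n+1}}|Y_t-Y_{t_n}|^2]\le Ch\,\E[\int_{t_n}^{t_{n+1}}(1+|X_s|^2+|Y_s|^2+|Z_s|^2)\,ds\,]$ for the drift contribution and $\E[\int_{t_n}^{t_{n+1}}|Z_s|^2ds]$ for the martingale contribution; summing the martingale pieces over $n$ only gives $\E[\int_0^T|Z_s|^2ds]\le C(1+|x|^2)$, which is $O(1)$, not $O(h)$ — so a naive bound on $\max_n\E[\sup|Y_t-Y_{t_n}|^2]$ is not yet $O(h)$ termwise for the stochastic integral.

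This is where the main obstacle lies, and it is exactly the point where one invokes the PDE regularity: since $u\in C^{1,2}$, on the compact time interval $[0,T]$ one has $Z_t=\sigma^T(X_t)D_xu(t,X_t)$, and the joint continuity (indeed local Lipschitz/H\"older regularity) of $(t,x)\mapsto\sigma^T(x)D_xu(t,x)$ lets one compare $Z_t$ with $Z_{t_n}$ directly: $|Z_t-Z_{t_n}|\le C(|t-t_n|^{1/2}+|X_t-X_{t_n}|)$ on the relevant compact set (or with a localization argument using the moment bounds). Then $\E[\int_{t_n}^{t_{n+1}}|Z_t-Z_{t_n}|^2dt]\le Ch\cdot(h + \E[\sup_{t_n\le t\le t_{n+1}}|X_t-X_{t_n}|^2])\le C(1+|x|^2)h^2$, and summing over the $N=T/h$ intervals gives $C(1+|x|^2)h$ as claimed. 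The analogous argument for $Y_t-Y_{t_n}=u(t,X_t)-u(t_n,X_{t_n})$ gives $\E[\sup_{t_n\le t\le t_{n+1}}|Y_t-Y_{t_n}|^2]\le C(1+|x|^2)h$ directly, uniformly in $n$.

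Finally I would assemble the two pieces and absorb all constants into the generic $C$. I expect the only genuinely delicate point to be the localization needed to turn the $C^{1,2}$ (hence only locally Lipschitz) regularity of $u$ and $D_xu$ into the global quadratic-moment estimates — one either assumes bounded derivatives of $u$ (often implicit in "classical solution" settings with Lipschitz data), or runs a stopping-time truncation and controls the tails using the exponential moment bounds for $\sup_t|X_t|$. Since this is the "standard result from \cite{Jianfeng}", I would state it with a reference and include only the short derivation sketch above for completeness.
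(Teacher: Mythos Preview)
The paper gives no proof of this statement; it is quoted as a standard result from \cite{Jianfeng} and used as a black box. So your proposal is to be compared with the argument in the cited reference rather than with anything in the paper itself.

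Your sketch is essentially sound but relies on Assumption~2 --- the PDE representation $Y_t=u(t,X_t)$, $Z_t=\sigma^T(X_t)D_xu(t,X_t)$ --- whereas the theorem as stated assumes only Assumption~1. The proof in \cite{Jianfeng} takes a genuinely different route: it does \emph{not} assume a classical PDE solution, and instead establishes the $L^2$-regularity of $Z$ directly from the BSDE, via a representation of the form $\bar Z_{t_n}=\tfrac{1}{h}\,\E\big[\int_{t_n}^{t_{n+1}}Z_s\,ds\,\big|\,\calf_{t_n}\big]$ together with BSDE a~priori and stability estimates on each subinterval and the forward SDE path regularity. That route buys generality (no $C^{1,2}$ solution needed, hence no localization headache) at the price of a more technical BSDE argument. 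Your PDE-based shortcut is shorter and perfectly adequate for this paper since Assumption~2 is a standing assumption throughout --- indeed the paper itself uses exactly your style of argument in the non-Markovian subsection to re-derive Theorem~\ref{standard1} under Assumption~3 --- but as written it proves a slightly weaker statement than the theorem claims, and you rightly flag the localization needed to pass from $C^{1,2}$ (only local) regularity of $D_xu$ to global quadratic-moment bounds.
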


\begin{lemma}
    By \textit{Assumption 1} and \textit{Assumption 2}, the following inequality holds true
    \begin{enumerate}[i.]
        \item $\E[|\Delta X_n|^2] \leq C \Delta t $; $\ \ \E[|\Delta X^N_n|^2] \leq C \Delta t $
        \item $\E[\int_{t_n}^{t_{n+1}}|\Delta f_n |^2 dt] \leq C \Delta t^2 +C \E[|Y_{t_n}-Y^{N,Sig}_{t_n}|^2 \Delta t]+ C\E[\int_{t_n}^{t_{n+1}}|\Delta Z_n|^2 dt]$ 
    \end{enumerate}
\end{lemma}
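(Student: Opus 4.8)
The plan is to prove (i) first, since the estimate for $\Delta X_n$ is used in (ii). For the second inequality of (i), which concerns only the true SDE, I would write, for $t\in[t_n,t_{n+1}]$, $\Delta X_n^N = X_t - X_{t_n} = \int_{t_n}^{t} b(s,X_s)\,ds + \int_{t_n}^{t}\sigma(s,X_s)\,dW_s$, bound the drift term by Cauchy--Schwarz and the diffusion term by the It\^o isometry, use $|b(s,X_s)|^2+|\sigma(s,X_s)|^2\le C(1+|X_s|^2)$ (boundedness of $b(\cdot,0),\sigma(\cdot,0)$ together with Lipschitz continuity, Assumption 1), and close with the standard a priori bound $\E[\sup_{0\le s\le T}|X_s|^2]\le C(1+|x|^2)$. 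This gives $\E[|\Delta X_n^N|^2]\le C(1+|x|^2)(t-t_n)\le C\Delta t$. For the first inequality of (i) I would split $\Delta X_n=(X_t-X_{t_n})+(X_{t_n}-X_{t_n}^N)$: the first summand is handled exactly as above, and the second is the mean-square (strong order $1/2$) error of the Euler--Maruyama scheme \eqref{X_tilde}, which under Assumption 1 satisfies $\max_n\E[|X_{t_n}-X_{t_n}^N|^2]\le Ch\le C\Delta t$ since $h=\Delta t/M$.

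For (ii) the idea is a triangle-inequality decomposition of $\Delta f_n$ through the intermediate point $f(t_n,X_t,Y_t,Z_t)$. Using H\"older-$\tfrac12$ continuity of $f$ in $t$ for the first piece and Lipschitz continuity of $f$ in $(x,y,z)$ for the second, and then splitting $|Y_t-Y_{t_n}^{N,Sig}|\le|Y_t-Y_{t_n}|+|Y_{t_n}-Y_{t_n}^{N,Sig}|$, one obtains
\begin{align*}
|\Delta f_n|^2 \le C\big(|t-t_n| + |\Delta X_n|^2 + |Y_t-Y_{t_n}|^2 + |Y_{t_n}-Y_{t_n}^{N,Sig}|^2 + |\Delta Z_n|^2\big).
\end{align*}
Integrating over $t\in[t_n,t_{n+1}]$ and taking expectations, the five terms contribute, respectively: $\tfrac12\Delta t^2$; at most $\Delta t\cdot\E[|\Delta X_n|^2]\le C\Delta t^2$ by part (i); at most $\Delta t\cdot\E[\sup_{t_n\le t\le t_{n+1}}|Y_t-Y_{t_n}|^2]\le C\Delta t\, h\le C\Delta t^2$ by Theorem \ref{standard1} and $h\le\Delta t$; exactly $\E[|Y_{t_n}-Y_{t_n}^{N,Sig}|^2\Delta t]$; and exactly $\E[\int_{t_n}^{t_{n+1}}|\Delta Z_n|^2\,dt]$. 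Collecting these yields the stated bound.

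There is no deep obstacle here; the point requiring care is the bookkeeping that routes every error term into exactly one of the three advertised buckets $\Delta t^2$, $\E[|Y_{t_n}-Y_{t_n}^{N,Sig}|^2\Delta t]$, and $\E[\int_{t_n}^{t_{n+1}}|\Delta Z_n|^2\,dt]$. The crucial observations are: the $Z$-discrepancy already appears in the target form $\Delta Z_n=Z_t-Z_{t_n}^{N,Sig}$ and must \emph{not} be decomposed further; the $Y$-discrepancy, by contrast, has to be split by the triangle inequality into a path-regularity piece $|Y_t-Y_{t_n}|$ (absorbed into $\Delta t^2$ via Theorem \ref{standard1}) and a scheme-error piece $|Y_{t_n}-Y_{t_n}^{N,Sig}|$ (retained as is); and all $X$-discrepancies are absorbed by part (i). The only genuinely analytic ingredient beyond Assumption 1 and Theorem \ref{standard1} is the standard strong $L^2$ convergence of Euler--Maruyama used in (i); everything else is Cauchy--Schwarz, the It\^o isometry, and the Lipschitz/H\"older hypotheses. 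Assumption 2 enters only to ensure that $(Y,Z)$ and its Feynman--Kac representation $Y_t=u(t,X_t)$, $Z_t=\sigma^{\top}(X_t)D_xu(t,X_t)$ are well defined, so that all quantities above make sense.
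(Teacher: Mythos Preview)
Your argument is correct and follows essentially the same route as the paper: for (i) the paper simply cites the standard Euler--Maruyama strong error bound (Theorem 5.3.1 in \cite{Jianfeng}), and for (ii) it performs the same triangle-inequality split of $\Delta f_n$, uses the Lipschitz/H\"older hypotheses, and invokes Theorem \ref{standard1} to absorb the $|Y_t-Y_{t_n}|^2$ path-regularity term into $C\Delta t^2$. The only cosmetic difference is the choice of intermediate point---the paper routes through $f(t_n,X_{t_n},Y_{t_n},Z_t)$ rather than your $f(t_n,X_t,Y_t,Z_t)$---which shifts where the $X$-increments appear but leads to the same three-bucket bound.
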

\begin{proof}
    The result for i) is standard which is from Theorem 5.3.1 in \cite{Jianfeng}. ii) follows from the following sequence of inequalities and Theorem \ref{standard1}: for $t_n \leq t \leq {t_{n+1}}$ for any $t$:
        \begin{align*}
            \E[\int_{t_n}^{t_{n+1}}|\Delta f_n |^2 dt]& \leq 2\E[\int_{t_n}^{t_{n+1}}|f(t,X_t,Y_t,Z_t)-f(t_n,X_{t_n},Y_{t_n},Z_{t})|^2 \nonumber \\ 
            &+|f(t_n,X_{t_n},Y_{t_n},Z_{t})-f(t_n,X^N_{t_n},Y^{N,Sig}_{t_n}, Z^{N,Sig}_{t_n}) |^2 dt]\\
            &\leq C \Delta t^2 + C \E[\int_{t_n}^{t_{n+1}}|\Delta Y^N_n|^2  +|Y_{t_n}-Y^{N,Sig}_{t_n}|^2 dt]+ C\E[\int_{t_n}^{t_{n+1}}|\Delta Z_n|^2 dt] \\
            & \leq C \Delta t^2 +C \E[|Y_{t_n}-Y^{N,Sig}_{t_n}|^2 \Delta t]+ C\E[\int_{t_n}^{t_{n+1}}|\Delta Z_n|^2 dt]
        \end{align*}
\end{proof}

\subsubsection{Forward Algorithm}
\begin{lemma}\label{forward_z_lemma}
    Under \textit{Assumption 1} and \textit{Assumption 2}, further assume that $Z_t=\sigma^T(X_t)D_xu(t,X_t)$ is Lipschitz uniformly in $t$, then there exist $Z^{N,Sig}_{t_n}(\cdot)$ for different $0\leq n\leq N-1$ such that the following is true
    \begin{align}
        \sum^{N-1}_{n=0} \E[\int^{t_{n+1}}_{t_n}|Z_t-Z^{N,Sig}_{t_n}|^2 dt] \leq C \Delta t
    \end{align}
\end{lemma}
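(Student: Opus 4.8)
Write $z(t,x):=\sigma^{T}(x)D_xu(t,x)$, so that under Assumption 2 one has $Z_t=z(t,X_t)$, and recall that $Z^{N,Sig}_{t_n}$ is by definition a feed\mbox{-}forward network evaluated at $\pi_m(Sig(\hat X^{\tilde N}_{0:t_n}))$, whose endpoint coincides with $X^{N}_{t_n}=X^{\tilde N}_{t_n}$. The plan is to split, for $t\in[t_n,t_{n+1}]$,
\[
 |Z_t-Z^{N,Sig}_{t_n}|\ \le\ |Z_t-Z_{t_n}|\ +\ |z(t_n,X_{t_n})-z(t_n,X^{N}_{t_n})|\ +\ |z(t_n,X^{N}_{t_n})-Z^{N,Sig}_{t_n}|,
\]
into a time\mbox{-}discretization term, an Euler (forward\mbox{-}scheme) term, and a genuine network\mbox{-}approximation term, and to show the first two are already $O(\Delta t)$ after summing while the third can be driven below any prescribed level by choosing the network parameters.

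For the first term, summing the square over $n$ and integrating over $[t_n,t_{n+1}]$, Theorem \ref{standard1} gives directly $\sum_{n=0}^{N-1}\E\big[\int_{t_n}^{t_{n+1}}|Z_t-Z_{t_n}|^2\,dt\big]\le C(1+|x|^2)h\le C\Delta t$, since $h=\Delta t/M\le\Delta t$. For the second term, the hypothesis that $z(t,\cdot)$ is Lipschitz uniformly in $t$ yields $|z(t_n,X_{t_n})-z(t_n,X^{N}_{t_n})|\le L|X_{t_n}-X^{N}_{t_n}|$; by the strong $L^2$ convergence of the Euler--Maruyama scheme \eqref{X_tilde} (Theorem 5.3.1 in \cite{Jianfeng}, exactly as invoked in the preceding lemma) one has $\E|X_{t_n}-X^{N}_{t_n}|^2\le C\Delta t$ uniformly in $n$, hence $\sum_{n=0}^{N-1}\E\big[\int_{t_n}^{t_{n+1}}L^2|X_{t_n}-X^{N}_{t_n}|^2\,dt\big]\le N\Delta t\cdot L^2C\Delta t=TL^2C\Delta t\le C\Delta t$.

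For the third term I would use the elementary observation that the endpoint $X^{N}_{t_n}$ is an \emph{exact affine readout} of the level\mbox{-}one part of $\pi_m(Sig(\hat X^{\tilde N}_{0:t_n}))$, namely the level\mbox{-}one component equals $(t_n,\,X^{N}_{t_n}-x)$. Hence the composition of $x'\mapsto z(t_n,x')$ with this affine map is a Lipschitz (in particular continuous) function of the truncated signature — this is the concrete instance of the universal\mbox{-}nonlinearity proposition that we need — and we may take $Z^{N,Sig}_{t_n}$ of the form $\psi_n\!\circ\!\ell$, where $\ell$ is that affine readout and $\psi_n:\bR^d\to\bR^d$ is a feed\mbox{-}forward network. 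Then $|z(t_n,X^{N}_{t_n})-Z^{N,Sig}_{t_n}|=|z(t_n,X^{N}_{t_n})-\psi_n(X^{N}_{t_n})|$ depends only on $X^{N}_{t_n}$, and since $z(t_n,\cdot)$ has linear growth while $\E|X^{N}_{t_n}|^2\le C(1+|x|^2)$ uniformly in $n$ (a standard moment bound for the scheme under Assumption 1), one may localize: on $\{|X^{N}_{t_n}|\le R\}$ choose $\psi_n$ to approximate $z(t_n,\cdot)$ within $\delta$ by universal approximation, and for $R$ large the contribution of $\{|X^{N}_{t_n}|>R\}$ is controlled by $\int_{\{|x'|>R\}}C(1+|x'|^2)\,d\mu_n(x')$, which tends to $0$ in $R$ because $1+|x'|^2\in L^1(\mu_n)$. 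Choosing $R$ and $\delta$ so that $\E|z(t_n,X^{N}_{t_n})-Z^{N,Sig}_{t_n}|^2\le \Delta t^2/T$ for each $n$, the third group contributes at most $\sum_{n}\Delta t\cdot\Delta t^2/T=\Delta t^2\le C\Delta t$. Combining the three estimates yields the claim.

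\textbf{Main obstacle.} The only step that is not routine is this last one: the universal\mbox{-}nonlinearity / universal\mbox{-}approximation results are naturally stated over a \emph{compact} set of paths, whereas the law of the signature of a diffusion path is not compactly supported, so one must upgrade the uniform estimate to an $L^{2}(\Omega)$ estimate via the localization argument above, which is what forces the use of the uniform second\mbox{-}moment bound on $X^{N}_{t_n}$. Everything else reduces to Theorem \ref{standard1}, the Euler error bound, and the Lipschitz hypothesis on $z$.
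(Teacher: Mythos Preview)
Your proof is correct and follows exactly the same three-term decomposition as the paper: time discretization via Theorem~\ref{standard1}, Euler error via the uniform Lipschitz assumption on $z(t,\cdot)$, and universal approximation for the residual $|z(t_n,X^N_{t_n})-Z^{N,Sig}_{t_n}|$. The paper dispatches the last term in a single sentence invoking universal nonlinearity, whereas you supply the missing localization argument to pass from compact-set approximation to an $L^2(\Omega)$ bound --- this is strictly more than the paper's own proof provides and is a welcome addition.
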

\begin{proof}
    Note that we have the following sequence of inequalities
    \begin{align}
        \sum^{N-1}_{n=0}\E[\int^{t_{n+1}}_{t_n}|Z_t-Z^{N,Sig}_{t_n}|^2 dt]&\leq 2 \sum^{N-1}_{n=0}\E[\int^{t_{n+1}}_{t_n}|Z_t-Z_{t_n}|^2+|Z_{t_n}-Z^{N,Sig}_{t_n}|^2 dt] \nonumber \\ 
        & \leq C\Delta t+ 4\sum^{N-1}_{n=0}\E[\int^{t_{n+1}}_{t_n}|Z_{t_n}(X_{t_n})-Z_{t_n}(X^N_{t_n})|^2+|Z_{t_n}(X^N_{t_n})-Z^{N,Sig}_{t_n}|^2 dt] \nonumber \\
        & \leq C\Delta t + 4\sum^{N-1}_{n=0} \E[\int^{t_{n+1}}_{t_n}|Z_{t_n}(X^N_{t_n})-Z^{N,Sig}_{t_n}|^2 dt] \nonumber \\ 
        & \leq C\Delta t
    \end{align}
    The last term can be made arbitrarily small by the universal non-linearity of the path signature. 
\end{proof}
Next, we state and prove the main theorem of the algorithm. 
\begin{theorem}\label{forward_main_theorem_markovian}
    Under \textit{Assumption 1} and \textit{Assumption 2} then there exist $Z^{N,Sig}_n(\cdot)$ for different $0\leq n\leq N-1$ such that the following is true
    \begin{align}
        \max_{0\leq n \leq N} \E[\sup_{t_n \leq t \leq t_{n+1}}|Y_t-Y^{N,Sig}_{t_n}|^2] \leq C \Delta t
    \end{align}
\end{theorem}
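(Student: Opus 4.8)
The plan is to reduce the claim to an estimate at the coarse grid points, $\max_{0\le n\le N}\E[|Y_{t_n}-Y^{N,Sig}_{t_n}|^2]\le C\Delta t$, and then to prove that estimate by a forward-in-$n$ discrete Gr\"onwall argument applied to the Euler scheme \eqref{bsde_alg1}. For the reduction, for $t\in[t_n,t_{n+1}]$ one writes $|Y_t-Y^{N,Sig}_{t_n}|^2\le 2|Y_t-Y_{t_n}|^2+2|Y_{t_n}-Y^{N,Sig}_{t_n}|^2$; taking the supremum over $t$, then expectations and the maximum over $n$, the first term contributes $\le C\Delta t$ by Theorem \ref{standard1}, so only the grid-point error remains. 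For the set-up I would take $\{Z^{N,Sig}_n(\cdot)\}_{0\le n\le N-1}$ to be the approximators furnished by Lemma \ref{forward_z_lemma}, so that with $\Delta Z_n=Z_t-Z^{N,Sig}_{t_n}$ one has $\sum_{n=0}^{N-1}\E[\int_{t_n}^{t_{n+1}}|\Delta Z_n|^2\,ds]\le C\Delta t$, and I would initialize the forward recursion at $Y^{N,Sig}_0=u(0,x)=Y_0$ so that the initial error $\delta Y_0:=Y_0-Y^{N,Sig}_0$ vanishes. (Only the Lipschitz part of Assumption 1 is used here; the smallness in its item 4 is not needed for the forward method.)

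For the one-step estimate, using $Z^{N,Sig}_n\,\Delta W^N_{n+1}=\int_{t_n}^{t_{n+1}}Z^{N,Sig}_n\,dW_s$ and $f_n(\cdot)\,\Delta t=\int_{t_n}^{t_{n+1}}f_n(\cdot)\,ds$, I would subtract \eqref{bsde_alg1} from the exact relation $Y_{t_{n+1}}-Y_{t_n}=-\int_{t_n}^{t_{n+1}}f(s,X_s,Y_s,Z_s)\,ds+\int_{t_n}^{t_{n+1}}Z_s\,dW_s$ to obtain, with $\delta Y_n=Y_{t_n}-Y^{N,Sig}_{t_n}$ and $\Delta f_n$ as in Section 4.2,
\[
\delta Y_{n+1}=\delta Y_n-\int_{t_n}^{t_{n+1}}\Delta f_n\,ds+\int_{t_n}^{t_{n+1}}\Delta Z_n\,dW_s.
\]
Squaring and taking expectations, the cross term between $\delta Y_n$ and the stochastic integral vanishes since $\delta Y_n$ is $\mathcal{F}_{t_n}$-measurable and $\E[\int_{t_n}^{t_{n+1}}\Delta Z_n\,dW_s\mid\mathcal{F}_{t_n}]=0$; the cross term $\E[\int_{t_n}^{t_{n+1}}\Delta f_n\,ds\cdot\int_{t_n}^{t_{n+1}}\Delta Z_n\,dW_s]$ is controlled by Cauchy--Schwarz and Young's inequality, and $\E[|\int_{t_n}^{t_{n+1}}\Delta f_n\,ds|^2]\le\Delta t\,\E[\int_{t_n}^{t_{n+1}}|\Delta f_n|^2\,ds]$. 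Inserting the bound $\E[\int_{t_n}^{t_{n+1}}|\Delta f_n|^2\,ds]\le C\Delta t^2+C\Delta t\,\E[|\delta Y_n|^2]+C\E[\int_{t_n}^{t_{n+1}}|\Delta Z_n|^2\,ds]$ (part ii of the Lemma stated just after Theorem \ref{standard1}) together with It\^o's isometry for the martingale term, I expect to arrive at
\[
\E[|\delta Y_{n+1}|^2]\le(1+C\Delta t)\,\E[|\delta Y_n|^2]+C\Delta t^2+C\,\E\!\left[\int_{t_n}^{t_{n+1}}|\Delta Z_n|^2\,ds\right].
\]

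Iterating this recursion forward from $\delta Y_0=0$, bounding $(1+C\Delta t)^N\le e^{CT}$, using $N\Delta t=T$, and invoking $\sum_{n}\E[\int_{t_n}^{t_{n+1}}|\Delta Z_n|^2\,ds]\le C\Delta t$ from Lemma \ref{forward_z_lemma}, gives $\max_{0\le n\le N}\E[|\delta Y_n|^2]\le C\Delta t$, which combined with the reduction step proves the theorem. The only step requiring genuine care is the one-step estimate: since \eqref{bsde_alg1} is explicit, $\Delta f_n$ and $\Delta Z_n$ are not $\mathcal{F}_{t_n}$-measurable, so the $\Delta f_n$--$\Delta Z_n$ cross term does not vanish and must be split with Young's constants chosen so that its summed contribution is $O(\Delta t)$ while the Lipschitz-in-$y$ part stays at order $\Delta t\,\E[|\delta Y_n|^2]$ so that the discrete Gr\"onwall closes; everything else (the continuity estimates, It\^o's isometry, the discrete Gr\"onwall) is routine.
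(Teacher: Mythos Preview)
Your proposal is correct and follows essentially the same route as the paper: difference the exact and Euler-discretized BSDEs, square, use that the $\delta Y_n$--stochastic-integral cross term vanishes, bound $\Delta f_n$ via Lemma~4.2(ii), apply discrete Gr\"onwall with Lemma~\ref{forward_z_lemma}, and then reduce to the grid-point error via Theorem~\ref{standard1}. The only cosmetic difference is that the paper groups $\delta Y_n+\int_{t_n}^{t_{n+1}}\Delta Z_n\,dW_s$ together and applies Young's inequality in the form $(a+b)^2\le(1+\tfrac{\Delta t}{\epsilon})a^2+(1+\tfrac{\epsilon}{\Delta t})b^2$ with $b=-\int\Delta f_n\,ds$, thereby sidestepping the $\Delta f_n$--$\Delta Z_n$ cross term you flag, whereas you expand fully and control that cross term directly; both routes yield the same one-step recursion.
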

\begin{proof}
Take the difference between the following set of equations 
    \begin{align} \label{fbsde_diff}
	\begin{cases}
		Y_{t_{n+1}}=Y_{t_n}-\int^{t_{n+1}}_{t_n} f(t,X_t,Y_t, Z_t) dt + \int^{t_{n+1}}_{t_n} Z_t dW_t\\
		Y^{N,Sig}_{t_{n+1}}=Y^{N,Sig}_{t_n}-\int^{t_{n+1}}_{t_n} f(t_n,X^N_{t_n},Y^{N,Sig}_{t_n}, Z^{N,Sig}_{t_n}) dt + \int^{t_{n+1}}_{t_n} Z^{N,Sig}_{t_n}dW_t
	\end{cases}
\end{align}  
and we obtain the following
\begin{align}
    \Delta Y_{n+1} =  \Delta Y_n -\int^{t_{n+1}}_{t_n} (\Delta f_n) dt + \int^{t_{n+1}}_{t_n} (Z_t-Z^{N,Sig}_n) dW_t
\end{align}
Denote $\Delta \bar{Y}^{N,Sig}_n:=Y_{t_{n}}-Y^{N, Sig}_{t_{n}}$
By squaring both sides and using Young's inequality with epsilon, we have 
\begin{align}
    \E | \Delta \bar{Y}^{N,Sig}_{n+1}|^2 &\leq (1+\frac{\Delta t}{\epsilon})\E[\E[|\Delta \bar{Y}^{N,Sig}_n+ \int^{t_{n+1}}_{t_n} (Z_t-Z^{N,Sig}_n) dW_t|^2 | \calf_{t_n}] ] + (1+\frac{\epsilon}{\Delta t})\E[\Big( -\int^{t_{n+1}}_{t_n} \Delta f_n dt  \Big)^2] \nonumber \\ 
    & \leq (1+\frac{\Delta t}{\epsilon})\E[\E[|\Delta \bar{Y}^{N,Sig}_n|^2 + |\int^{t_{n+1}}_{t_n} (Z_t-Z^{N,Sig}_n) dW_t|^2 | \calf_{t_n}]] + 2(1+\frac{\epsilon}{\Delta t}) \Big( \E[\int^{t_{n+1}}_{t_n} |\Delta f_n|^2 dt ] \Delta t  \Big)\nonumber \\ 
    &\leq (1+\frac{\Delta t}{\epsilon})\E[|\Delta \bar{Y}^{N,Sig}_n|^2]+ C(\Delta t^2+ \E[\int^{t_{n+1}}_{t_n} |Z_t-Z^{N,Sig}_n|^2 dt] +\E[|\Delta \bar{Y}^{N,Sig}_n|^2] \Delta t ) \nonumber\\
    &+ C\E[\int^{t_{n+1}}_{t_n} |Z_t-Z^{N,Sig}_n|^2] dt \nonumber \\
    & \leq (1+\frac{\Delta t}{\epsilon})\E[|\Delta \bar{Y}^{N,Sig}_n|^2]+ C(\Delta t^2+  \E[\int^{t_{n+1}}_{t_n} |Z_t-Z^{N,Sig}_n|^2] dt+\E[\int^{t_{n+1}}_{t_n} |\Delta Z^N_n|^2] dt) \nonumber
\end{align}
Then, by Discrete Gronwall inequality, by Lemma \ref{forward_z_lemma} and Theorem \ref{standard1}, we have 
\begin{align}
    \sup_{0 \leq n \leq N-1} \E[|Y_{t_n}-Y^{N,Sig}_{t_n}|^2] \leq C \Delta t
\end{align}
Then together with Lemma \ref{standard1}, the claim is proved. 
\end{proof}

\subsubsection{Backward Algorithm}
Since the convergence takes the variance of $Y^{N,Sig}_0$ as the loss function, we first prove an a-priori estimate for the loss. 

Now we state the main thereom regarding the convergence of the backward scheme. 
We also provide a lower bound for $\text{Var}(Y^{N,Sig}_0)$
\begin{theorem}
For any $\epsilon >0$, we have 
     \begin{align} \label{var_Y_lower}
        \text{Var}(Y^{N,Sig}_0) > (1-\epsilon -\frac{2T K_z}{\epsilon}) \E[\int^T_{0} |\Delta Z_n|^2 dt] -C\Delta t -\frac{4T^2K_y}{\epsilon}\E[ \max_{0 \leq n \leq N-1} |Y_{t_n}-Y^N_{t_n}|^2 ]
    \end{align}
\end{theorem}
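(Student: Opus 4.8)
The plan is to unroll the backward recursion \eqref{bsde_alg2} down to time $0$, compare it with the exact BSDE at time $0$, and then exploit that the exact value $Y_0=u(0,x)$ is deterministic, so $\text{Var}(Y^{N,Sig}_0)=\text{Var}(Y^{N,Sig}_0-Y_0)$. Telescoping \eqref{bsde_alg2} from $n=N$ down to $n=0$ (the terminal being $g(X_T)$ up to an $O(\Delta t)$-in-$L^2$ discretization error that is absorbed below) and writing $f^{Sig}_t:=f_n(X^N_{t_n},Y^{N,Sig}_{t_{n+1}},Z^{N,Sig}_{t_n})$ for $t\in[t_n,t_{n+1})$, one gets
\begin{align*}
    Y^{N,Sig}_0-Y_0=\underbrace{\int_0^T\big(f^{Sig}_t-f(t,X_t,Y_t,Z_t)\big)\,dt}_{=:A}\;+\;\underbrace{\int_0^T \Delta Z_n\,dW_t}_{=:M},
\end{align*}
where on $[t_n,t_{n+1})$ the integrand of $M$ is exactly $\Delta Z_n=Z_t-Z^{N,Sig}_{t_n}$. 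This integrand is adapted and square integrable, so $M$ is a mean-zero martingale increment and the It\^o isometry gives $\E[M^2]=\E\big[\int_0^T|\Delta Z_n|^2\,dt\big]$, while $A$ is $\mathcal F_T$-measurable and square integrable.

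Next, writing $Y^{N,Sig}_0-\E[Y^{N,Sig}_0]=(A-\E A)+M$ and using $\E[M]=0$ together with Young's inequality,
\begin{align*}
    \text{Var}(Y^{N,Sig}_0)=\E[(A-\E A)^2]+2\,\E[(A-\E A)M]+\E[M^2]\;\ge\;(1-\epsilon)\,\E[M^2]-\tfrac1\epsilon\,\E[A^2].
\end{align*}
So the problem reduces to bounding $\E[A^2]$ from above in terms of $\E[\int_0^T|\Delta Z_n|^2\,dt]$ (with a factor kept as small as possible), the grid-wise $Y$-error, and $O(\Delta t)$.

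To estimate $\E[A^2]$, I would apply the Cauchy--Schwarz inequality once, $\E[A^2]\le T\,\E\big[\int_0^T|f^{Sig}_t-f(t,X_t,Y_t,Z_t)|^2\,dt\big]$, and on each interval $[t_n,t_{n+1})$ split
\begin{align*}
    f^{Sig}_t-f(t,X_t,Y_t,Z_t)=\big[f_n(X^N_{t_n},Y^{N,Sig}_{t_{n+1}},Z^{N,Sig}_{t_n})-f_n(X^N_{t_n},Y_t,Z_t)\big]+\big[f_n(X^N_{t_n},Y_t,Z_t)-f(t,X_t,Y_t,Z_t)\big].
\end{align*}
By \textit{Assumption 1.4} the squared norm of the first bracket is at most $K_y|Y^{N,Sig}_{t_{n+1}}-Y_t|^2+K_z|\Delta Z_n|^2$, and the second bracket is controlled by the H\"older-$\tfrac12$-in-$t$ and Lipschitz-in-$x$ bounds of \textit{Assumption 1.2--1.3} together with Lemma~(i) ($\E|\Delta X_n|^2\le C\Delta t$), hence is $O(\Delta t)$ in $L^2$. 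Using $|a+b|^2\le 2|a|^2+2|b|^2$, then $|Y^{N,Sig}_{t_{n+1}}-Y_t|^2\le 2|Y_{t_{n+1}}-Y^{N,Sig}_{t_{n+1}}|^2+2|Y_{t_{n+1}}-Y_t|^2$ with Theorem~\ref{standard1} bounding the path oscillation of $Y$ by $C\Delta t$, and summing the piecewise-constant bounds against $\Delta t$, one obtains
\begin{align*}
    \E[A^2]\le 2TK_z\,\E\Big[\int_0^T|\Delta Z_n|^2\,dt\Big]+4T^2K_y\,\E\Big[\max_{0\le n\le N-1}|Y_{t_n}-Y^N_{t_n}|^2\Big]+C\Delta t .
\end{align*}
Substituting this into the variance estimate gives exactly \eqref{var_Y_lower}.

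The routine ingredients are the consistency bounds for the driver and the path-regularity estimates, all furnished by \textit{Assumptions 1--2}, the Lemma, and Theorem~\ref{standard1}. The point that requires care, and which dictates the precise shape of \eqref{var_Y_lower}, is the constant bookkeeping: the coefficient in front of $\E[\int_0^T|\Delta Z_n|^2\,dt]$ must come out as $1-\epsilon-\tfrac{2TK_z}{\epsilon}$, which forces the \emph{two}-way split of $f^{Sig}_t-f$ above (any coarser decomposition inflates the factor $2$), a single Cauchy--Schwarz applied to the drift integral, and crucial use of the fact that \textit{Assumption 1.4} bounds the increment of $f$ by a \emph{sum} $K_y|\cdot|^2+K_z|\cdot|^2$ with no spurious multiplicative constant. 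A secondary subtlety is that the $Y$-increment produced by the driver is an error of the numerical $Y$ against the exact $Y$ at grid points; one retains it as the separate maximum-norm term in \eqref{var_Y_lower} rather than folding it into $C\Delta t$ precisely because it is later estimated by the companion a-priori bound on $Y^{N,Sig}$, and the two estimates are then combined to close the convergence argument. Finally, note that the sign with which $\Delta W^N_n$ enters \eqref{bsde_alg2} is immaterial here, since only the $L^2$-norm (equivalently the variance) of the stochastic-integral term is used.
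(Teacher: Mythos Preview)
Your proposal is correct and follows essentially the same route as the paper: both arguments use that $Y_0$ is deterministic so $\text{Var}(Y^{N,Sig}_0)=\text{Var}(Y_0-Y^{N,Sig}_0)$, expand the difference as a drift piece plus the stochastic integral $\int_0^T\Delta Z_n\,dW_t$, apply Young's inequality to the cross term to extract the factor $(1-\epsilon)$ on the It\^o-isometry part, and then bound the drift contribution via Cauchy--Schwarz together with \textit{Assumption~1.4} and Theorem~\ref{standard1}. The only cosmetic difference is that the paper carries the terminal mismatch $\Delta g_n$ explicitly through the inequalities before bounding it by $C\Delta t$, whereas you absorb it into the $O(\Delta t)$ error at the outset; the constant tracking is otherwise identical.
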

\begin{proof}
  Note that the true solution $Y_0$ is deterministic and that $\text{Var}(Y^{N,Sig}_0)= \text{Var}(Y_0-Y^{N,Sig}_0)$, 
    we have the following inequalities: 
    \begin{align}
        \text{Var}(Y^{N,Sig}_0) &= \E \Big[
       \big(\Delta g_n - \int^T_0 \Delta f_n ds + \int^T_0 \Delta Z_n dW_t - \E[\Delta g_n -\int^T_0\Delta f_n ds ] \big)^2
       \Big] \nonumber\\ 
       & \geq \E[\int^T_{0} |\Delta Z_n|^2 dt]+2  \E \Big[\int^T_t \Delta Z_n dW_t(\Delta g_n-\int^T_0 \Delta f_n dt -\E[\Delta g_n-\int^T_0 \Delta f_n dt]) \Big] \nonumber \\
      % & + \E[(\Delta g_n-\int^T_0 \Delta f_n dt-\E[\Delta g_n-\int^T_0 \Delta f_n dt] )^2] \nonumber \\ 
       & \geq (1-\epsilon) \E[\int^T_{0} |\Delta Z_n|^2 dt] - \frac{1}{\epsilon}\E[(\Delta g_n-\int^T_0 \Delta f_n dt -\E[\Delta g_n+\int^T_0 \Delta f_n dt])^2] \\ 
       & \geq (1-\epsilon) \E[\int^T_{0} |\Delta Z_n|^2 dt] - \frac{1}{\epsilon}\E[(\Delta g_n-\int^T_0 \Delta f_n dt)^2] \nonumber \\ 
       & \geq (1-\epsilon) \E[\int^T_{0} |\Delta Z_n|^2 dt] - \frac{2}{\epsilon} ( \E[ |\Delta g_n|^2]+ T \E[\int^T_0 |\Delta f_n|^2 dt] ) \nonumber \\ 
       & \geq (1-\epsilon) \E[\int^T_{0} |\Delta Z_n|^2 dt] - C\Delta t -\frac{2}{\epsilon}(C\Delta t+T\E[\int^T_0 K_y |\Delta Y_n|^2 + K_z|\Delta Z_n|^2 dt]) \nonumber \\ 
       & \geq (1-\epsilon -\frac{2T K_z}{\epsilon}) \E[\int^T_{0} |\Delta Z_n|^2 dt] -C\Delta t -\frac{2TK_y}{\epsilon}\E[\int^T_0 |Y_t -Y_{t_n}+Y_{t_n}-Y^N_{t_n}|^2dt ] \nonumber \\
       & \geq (1-\epsilon -\frac{2T K_z}{\epsilon}) \E[\int^T_{0} |\Delta Z_n|^2 dt] -C\Delta t -\frac{4T^2K_y}{\epsilon}\E[ \max_{0 \leq n \leq N-1} |Y_{t_n}-Y^N_{t_n}|^2 ] \label{lower_var}
    \end{align}
    where in the inequalities we used the fact that $Var(X) \leq \E[X^2]$ and the second inequality that $2 ab \geq -\epsilon a^2 -\frac{1}{\epsilon^2} b^2$. In the second to last inequality, we used Theorem \ref{standard1}.
\end{proof}
We state the following main theorem about the convergence of the backward algorithm. 
\begin{theorem} \label{theorem_yz}
    Under \textit{Assumption 1} and \textit{Assumption 2}, the following inequality holds 
    \begin{align}
        \sup_{t \in[0,T]} \E[|Y_t - Y^{N, Sig}_{t_n}|^2]+ \int^T_0 \E[|Z_t-Z^{N, Sig}_{t_n}|^2 dt] \leq C(\Delta t+\text{Var}(Y^{N,Sig}_0))
    \end{align}
\end{theorem}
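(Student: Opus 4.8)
The plan is to run a backward one-step error recursion for the grid errors $\Delta\bar Y_n := Y_{t_n} - Y^{N,Sig}_{t_n}$, close it against the $Z$-error by a discrete Gronwall argument, and then eliminate the $Z$-error using the a priori lower bound \eqref{var_Y_lower} already proven. Over each subinterval I would write the exact BSDE $Y_{t_n} = Y_{t_{n+1}} + \int_{t_n}^{t_{n+1}} f(s,X_s,Y_s,Z_s)\,ds - \int_{t_n}^{t_{n+1}} Z_s\,dW_s$, subtract the scheme \eqref{bsde_alg2}, and condition on $\calf_{t_n}$; this is legitimate because $Z^{N,Sig}_{t_n}$, being a function of the signature of the path of $X$ up to $t_n$, is $\calf_{t_n}$-measurable, so the martingale increment $\int_{t_n}^{t_{n+1}}(Z_s - Z^{N,Sig}_{t_n})\,dW_s$ drops out and
\[
\Delta\bar Y_n = \E\Big[\Delta\bar Y_{n+1} + \textstyle\int_{t_n}^{t_{n+1}} \Delta f_n\, ds \,\Big|\, \calf_{t_n}\Big].
\]
Squaring, using $(a+b)^2 \le (1+\Delta t)a^2 + (1+\Delta t^{-1})b^2$, conditional Jensen, and the second estimate of the preceding Lemma for $\E\big[\int_{t_n}^{t_{n+1}}|\Delta f_n|^2\,ds\big]$ — with the $y$- and $z$-Lipschitz constants $K_y,K_z$ of Assumption 1.4 kept explicit — produces a recursion of the shape
\[
\E|\Delta\bar Y_n|^2 \;\le\; (1 + C\Delta t)\,\E|\Delta\bar Y_{n+1}|^2 + C\Delta t^2 + C\,\E\Big[\textstyle\int_{t_n}^{t_{n+1}}|Z_s - Z^{N,Sig}_{t_n}|^2\,ds\Big].
\]

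Together with the terminal bound $\E|\Delta\bar Y_N|^2 \le C\Delta t$ (the terminal condition is matched up to the strong error of the Euler scheme for $X$, controlled by part i) of the Lemma and Lipschitz continuity of $g$), a discrete Gronwall inequality summed backward in $n$ gives $\max_n \E|\Delta\bar Y_n|^2 \le C\Delta t + C\,\E\big[\int_0^T|Z_t - Z^{N,Sig}_{t_n}|^2\,dt\big]$; to obtain the same bound for $\E[\max_n|\Delta\bar Y_n|^2]$, which is the quantity appearing in \eqref{var_Y_lower}, I would rerun the estimate without conditioning away the stochastic integral, apply Doob's maximal inequality to the discrete martingale $\big(\int_0^{t_n}(Z_s-Z^{N,Sig}_{t_{k(s)}})\,dW_s\big)_n$, and absorb the leftover $\max$-term using the smallness of $K_y$. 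On the other side, fixing $\epsilon$ in \eqref{var_Y_lower} (e.g. $\epsilon = \tfrac12$) and using that $K_z$ is small enough that $1-\epsilon-2TK_z/\epsilon$ stays bounded below by a positive constant, that bound rearranges to
\[
\E\Big[\textstyle\int_0^T|Z_t - Z^{N,Sig}_{t_n}|^2\,dt\Big] \;\le\; C\,\text{Var}(Y^{N,Sig}_0) + C\Delta t + C K_y\,\E\big[\max_n|\Delta\bar Y_n|^2\big].
\]

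Substituting this into the Gronwall bound and then choosing $K_y$ small enough (Assumption 1.4) to absorb $\E[\max_n|\Delta\bar Y_n|^2]$ on the left yields $\E[\max_n|\Delta\bar Y_n|^2] \le C(\Delta t + \text{Var}(Y^{N,Sig}_0))$, and feeding this back gives the matching bound for $\E\big[\int_0^T|Z_t - Z^{N,Sig}_{t_n}|^2\,dt\big]$. Finally, for $t\in[t_n,t_{n+1}]$ I split $\E|Y_t - Y^{N,Sig}_{t_n}|^2 \le 2\,\E|Y_t - Y_{t_n}|^2 + 2\,\E|\Delta\bar Y_n|^2$ and bound the first term by Theorem \ref{standard1} (so by $Ch \le C\Delta t$), and treat the $Z$ difference the same way, to reach the claimed estimate. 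The main obstacle is precisely the circular dependence between the $Y$-error and the $Z$-error: the backward Gronwall alone gives $Y$-error $\lesssim \Delta t + Z$-error while \eqref{var_Y_lower} alone gives $Z$-error $\lesssim \text{Var} + \Delta t + K_y\cdot Y$-error, and neither closes by itself — making the two absorptions work is what forces the constants $K_y,K_z$ to be tracked explicitly and to be small, which is exactly why Assumption 1.4 is imposed; a secondary technical point is the passage from $\max_n\E[\cdot]$ to $\E[\max_n\cdot]$, handled via Doob's inequality as noted.
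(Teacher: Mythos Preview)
Your overall strategy --- bound the grid $Y$-error by the $Z$-error, invoke the lower bound \eqref{var_Y_lower} to bound the $Z$-error by $\text{Var}(Y^{N,Sig}_0)$ plus a multiple of the $Y$-error, and close the loop using the smallness of $K_y,K_z$ --- matches the paper exactly. The gap is in how you obtain the first of these two bounds.

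Your one-step identity
\[
\Delta\bar Y_n \;=\; \E\Big[\Delta\bar Y_{n+1} + \textstyle\int_{t_n}^{t_{n+1}} \Delta f_n\, ds \,\Big|\, \calf_{t_n}\Big]
\]
is false: in the backward scheme \eqref{bsde_alg2} the quantity $Y^{N,Sig}_{t_n}$ is \emph{not} $\calf_{t_n}$-measurable, since it is propagated backward from $Y^{N,Sig}_T=g(X^{\tilde N}_T)$ and therefore depends on the entire path of $W$ on $[0,T]$. (Indeed this non-adaptedness is the point of the algorithm: if $Y^{N,Sig}_0$ were $\calf_0$-measurable it would be deterministic and the loss $\text{Var}(Y^{N,Sig}_0)$ would vanish identically.) What conditioning actually gives is only $\E[\Delta\bar Y_n\,|\,\calf_{t_n}]=\E[\,\cdots\,|\,\calf_{t_n}]$, which is not enough to deduce your recursion. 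If you instead square the raw identity $\Delta\bar Y_n=\Delta\bar Y_{n+1}+\int\Delta f_n-\int(Z_s-Z^{N,Sig}_{t_n})\,dW_s$ and take expectations, the cross term $\E\big[\Delta\bar Y_{n+1}\cdot\int_{t_n}^{t_{n+1}}(Z_s-Z^{N,Sig}_{t_n})\,dW_s\big]$ does not vanish (again because $\Delta\bar Y_{n+1}$ is only $\calf_T$-measurable), and controlling it by Young's inequality forces an extra factor $\Delta t^{-1}$ on the $Z$-error term, which destroys the discrete Gronwall.

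The paper sidesteps this entirely by using the \emph{global} representation: iterating the scheme to the terminal time gives $Y_{t_n}-Y^{N,Sig}_{t_n}=\Delta g+\int_{t_n}^{T}\Delta f_n\,ds-\int_{t_n}^{T}\Delta Z_n\,dW_s$, and then $(a+b+c)^2\le 3(a^2+b^2+c^2)$ together with It\^o isometry yields directly
\[
\max_{n}\E|Y_{t_n}-Y^{N,Sig}_{t_n}|^2 \;\le\; C\Delta t + C\,\E\!\int_0^T|\Delta Z_n|^2\,dt
\]
with no recursion at all; the rest of the closure is as you describe. A side remark: the passage from $\max_n\E[\cdot]$ to $\E[\max_n\cdot]$ that you flag is not actually needed --- tracing the derivation of \eqref{var_Y_lower}, the term $\E[\int_0^T|\Delta Y_n|^2\,dt]$ is bounded by $T\max_n\E|Y_{t_n}-Y^{N,Sig}_{t_n}|^2$ (plus an $O(\Delta t)$ term from Theorem~\ref{standard1}), so $\max_n\E$ suffices and the Doob step is unnecessary.
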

\begin{proof}
    Take the difference between the exact solution and the numerical solution, we have the following inequality: 
    \begin{align}
        \E[|Y_{t_n}-Y^{N,Sig}_{t_n}|] &\leq 3 \Big(  \E[|\Delta g|^2]+ T \E[\int^T_0 |\Delta f_n|^2 dt  + \int^T_0 |\Delta Z_n|^2 dt] \Big) \nonumber\\ 
        &\leq 3 C\Delta t+ C\Delta t+ 3T\E[\int^T_0 K_y |\Delta Y_n|^2 + K_z |\Delta Z_n|^2 dt] + 3 \E[\int^T_0 |\Delta Z_n|^2 dt ] \nonumber \\ 
        &\leq C\Delta t + 3(1+TK_y)\E[\int^T_0 |\Delta Z_n|^2 dt]+ 3TK_y\E[\int^T_0 |Y_t -Y_{t_n}+Y_{t_n}-Y^{N,Sig}_{t_n}|^2dt] \nonumber \\ 
        & \leq C\Delta t+3(1+TK_y)\E[\int^T_0 |\Delta Z_n|^2 dt]+ 6T^2K_y \max_{0 \leq n \leq N-1} \E[|Y_{t_n}-Y^{N,Sig}_{t_n}|^2]
    \end{align} 
This shows that 
\begin{align}
     \max_{0 \leq n \leq N-1}\E[|Y_{t_n}-Y^{N,Sig}_{t_n}|^2] &\leq \frac{C \Delta t + 3(1+TK_y)\E[\int^T_0 |\Delta Z_n|^2 dt]}{1-6T^2 K_y} \nonumber \\ 
     &\leq C\Delta t+  \frac{3(1+TK_y)}{1-6T^2 K_y}\E[\int^T_0 |\Delta Z_n|^2 dt] \label{y_bound}
\end{align}
Using Theorem 4.4 (\eqref{lower_var}), we have 
\begin{align}
    \text{Var}(Y^{N,Sig}_0) &\geq (1-\epsilon -\frac{2T K_z}{\epsilon}) \E[\int^T_{0} |\Delta Z_n|^2 dt] -C\Delta t -\frac{4T^2K_y}{\epsilon} \big( C\Delta t+  \frac{3(1+TK_y)}{1-6T^2 K_y}\E[\int^T_0 |\Delta Z_n|^2 dt] \big) \nonumber \\
    & \geq \Big( (1-\epsilon -\frac{2T K_z}{\epsilon})+ \frac{4T^2K_y}{\epsilon}\frac{3(1+TK_y)}{1-6T^2 K_y} \Big)\E[\int^T_0 |\Delta Z_n|^2 dt]-C\Delta t
\end{align}
Then take $\epsilon= \sqrt{2T K_z+ 4T^2K_y \frac{3(1+TK_y)}{1-6T^2 K_y}}$ by the assumption on $K_z$ and $K_y$, since $2 \epsilon <1$
This implies that 
\begin{align}
    \E[\int^T_0 |\Delta Z_n|^2 dt] \leq C(\Delta t+\text{Var}(Y^{N,Sig}_0)) 
\end{align}
together with \eqref{y_bound} and Theorem \ref{standard1}, we have 
    \begin{align}
        \sup_{t \in[0,T]} \E[|Y_t - Y^{N, Sig}_{t_n}|^2]+ \int^T_0 \E[|Z_t-Z^{N, Sig}_{t_n}|^2 dt] \leq C(\Delta t+\text{Var}(Y^{N,Sig}_0))
    \end{align}
\end{proof}

\begin{theorem}\label{var_upper}
    Under \textit{Assumption 1} and \textit{Assumption 2}, the following inequality holds 
    \begin{align} \label{var_Y_upper}
        \text{Var}(Y^{N,Sig}_0) \leq C \Big(\Delta t+\sum_{0 \leq n \leq N-1} \E[|Z_{t_n}(X^N_{n})- Z^{N,Sig}_{t_n}|^2] \Delta t\Big)
    \end{align}
\end{theorem}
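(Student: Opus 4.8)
The plan is to telescope both the exact backward equation and the numerical scheme \eqref{bsde_alg2} over the whole interval $[0,T]$, take the difference, and then bound its second moment (which dominates the variance) by the three sources of error: the terminal mismatch $\Delta g$, the accumulated driver error $\int_0^T\Delta f_n\,dt$, and the stochastic integral $\int_0^T\Delta Z_n\,dW_t$. Summing \eqref{bsde_alg2} from $n=1$ to $N$ and using $Y^{N,Sig}_N=g(X^N_N)$ gives $Y^{N,Sig}_0=g(X^N_N)+\sum_{n=1}^N f_{n-1}(X^N_{n-1},Y^{N,Sig}_n,Z^{N,Sig}_{n-1})\Delta t+\sum_{n=1}^N Z^{N,Sig}_{n-1}\Delta W^N_n$, which I would compare with $Y_0=g(X_T)+\int_0^T f(t,X_t,Y_t,Z_t)\,dt-\int_0^T Z_t\,dW_t$. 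Since $Y_0$ is deterministic, $\text{Var}(Y^{N,Sig}_0)=\text{Var}(Y_0-Y^{N,Sig}_0)\le\E[(Y_0-Y^{N,Sig}_0)^2]$, and writing $Y_0-Y^{N,Sig}_0=\Delta g-\int_0^T\Delta f_n\,dt+\int_0^T\Delta Z_n\,dW_t$ exactly as in the proof of \eqref{var_Y_lower} (with the usual one-step adjustment that the numerical driver reads $Y^{N,Sig}_n$ rather than $Y^{N,Sig}_{n-1}$, which costs only $|Y_{t_{n-1}}-Y_{t_n}|$, controlled in $L^2$ by Theorem \ref{standard1}), the inequality $(a+b+c)^2\le 3(a^2+b^2+c^2)$, Cauchy--Schwarz on the $dt$ integral, and the It\^o isometry yield
$$\text{Var}(Y^{N,Sig}_0)\le 3\,\E[|\Delta g|^2]+3T\,\E\Big[\int_0^T|\Delta f_n|^2\,dt\Big]+3\,\E\Big[\int_0^T|\Delta Z_n|^2\,dt\Big].$$

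Next I would estimate each term. For the terminal term, $\E[|\Delta g|^2]\le L^2\,\E[|X_T-X^N_N|^2]\le C\Delta t$ by Lipschitz continuity of $g$ and the strong Euler error (part i of the Lemma preceding the forward analysis). For the $Z$-integral, I would split, on each $[t_n,t_{n+1}]$, $Z_t-Z^{N,Sig}_{t_n}=(Z_t-Z_{t_n})+(Z_{t_n}(X_{t_n})-Z_{t_n}(X^N_{t_n}))+(Z_{t_n}(X^N_{t_n})-Z^{N,Sig}_{t_n})$; summing over $n$, the first piece contributes $\le C\Delta t$ by Theorem \ref{standard1}, the second is $\le C\sum_n\E[|X_{t_n}-X^N_{t_n}|^2]\Delta t\le C\Delta t$ using the Lipschitz property of $x\mapsto Z_{t_n}(x)=\sigma^{\top}(x)D_x u(t_n,x)$ together with the Euler error, and the third is precisely $\sum_n\E[|Z_{t_n}(X^N_n)-Z^{N,Sig}_{t_n}|^2]\Delta t$, so that
$$\E\Big[\int_0^T|\Delta Z_n|^2\,dt\Big]\le C\Big(\Delta t+\sum_{0\le n\le N-1}\E[|Z_{t_n}(X^N_n)-Z^{N,Sig}_{t_n}|^2]\Delta t\Big).$$
For the driver term, part ii of the same Lemma bounds $\E[\int_{t_n}^{t_{n+1}}|\Delta f_n|^2\,dt]$ by $C\Delta t^2+C\,\E[|Y_{t_n}-Y^{N,Sig}_{t_n}|^2]\Delta t+C\,\E[\int_{t_n}^{t_{n+1}}|\Delta Z_n|^2\,dt]$; summing, the first contributes $C\Delta t$, the third contributes the $Z$-integral just estimated, and for the middle I would invoke \eqref{y_bound} from the proof of Theorem \ref{theorem_yz}, namely $\max_n\E[|Y_{t_n}-Y^{N,Sig}_{t_n}|^2]\le C\Delta t+C\,\E[\int_0^T|\Delta Z_n|^2\,dt]$, which again reduces to $\Delta t$ plus the network error. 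Substituting all three bounds into the displayed inequality for $\text{Var}(Y^{N,Sig}_0)$ gives \eqref{var_Y_upper}.

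The only delicate point is bookkeeping the apparent circularity between the $Y$- and $Z$-errors: the driver estimate requires $\max_n\E[|Y_{t_n}-Y^{N,Sig}_{t_n}|^2]$, which \eqref{y_bound} controls via $\E[\int_0^T|\Delta Z_n|^2\,dt]$, which in turn is controlled by the triangle decomposition above; hence the chain terminates at the network error $\sum_n\E[|Z_{t_n}(X^N_n)-Z^{N,Sig}_{t_n}|^2]\Delta t$ and no genuine fixed-point argument is needed, provided one keeps the generic constant $C$ and uses the smallness of $K_y,K_z$ so that the denominator $1-6T^2K_y$ in \eqref{y_bound} stays positive. A secondary technical nuisance is that the Lipschitz regularity of $x\mapsto Z_{t_n}(x)$ invoked in the $Z$-split is exactly the hypothesis already imposed in Lemma \ref{forward_z_lemma} and is consistent with Assumption 2; I would record it explicitly at the start of the proof.
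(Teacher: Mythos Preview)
Your proposal is correct and follows essentially the same route as the paper: write $\text{Var}(Y^{N,Sig}_0)=\text{Var}(Y_0-Y^{N,Sig}_0)$, expand the difference as $\Delta g-\int_0^T\Delta f_n\,dt+\int_0^T\Delta Z_n\,dW_t$, bound the three pieces by Lipschitz continuity of $g$, Lemma 4.2(ii) together with \eqref{y_bound}, and the triangle split $Z_t-Z^{N,Sig}_{t_n}=(Z_t-Z_{t_n})+(Z_{t_n}(X_{t_n})-Z_{t_n}(X^N_{t_n}))+(Z_{t_n}(X^N_{t_n})-Z^{N,Sig}_{t_n})$ with Theorem \ref{standard1}. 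The only cosmetic differences are that the paper first keeps the centering and uses $\text{Var}(X)\le\E[X^2]$ on the $\Delta g-\int\Delta f$ block (constants $2,4$) whereas you pass directly to $\E[(Y_0-Y^{N,Sig}_0)^2]$ and use $(a+b+c)^2\le 3(a^2+b^2+c^2)$; your explicit remark on the one-step shift $Y^{N,Sig}_n$ versus $Y^{N,Sig}_{n-1}$ in the backward driver and on the non-circularity of invoking \eqref{y_bound} are points the paper leaves implicit.
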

\begin{proof}
    Again, note that the true solution $Y_0$ is deterministic and that $\text{Var}(Y^{N,Sig}_0)= \text{Var}(Y_0-Y^{N,Sig}_0)$, 
    we have the following result: 
    \begin{align*}
       \text{Var}(Y^{N,Sig}_0) &= \E \Big[
       \big(\Delta g_n - \int^T_0 \Delta f_n ds + \int^T_0 \Delta Z_n dW_t - \E[\Delta g_n -\int^T_0\Delta f_n ds ] \big)^2
       \Big] \\ 
       & \leq 2 \E \Big[  (\Delta g_n -\int^T_0 \Delta f_n ds- \E[\Delta g_n +\int^T_0\Delta f_n ds ])^2 + \int^T_0 |\Delta Z_n|^2 dt \Big] \\ 
       & \leq2 \E \Big[  (\Delta g_n - \int^T_0 \Delta f_n ds)^2 + \int^T_0 |\Delta Z_n|^2 dt \Big] \\ 
       & \leq 4 \E \Big[  |\Delta g_n|^2  + T \int^T_0 \Delta |f_n|^2 ds] + 2 E[\int^T_0 |\Delta Z_n|^2 dt \Big]\\ 
       & \leq C \Delta t + C\E[\int^T_0 |\Delta Y_n|^2 + |\Delta Z_n|^2  dt]+ 2 E[\int^T_0 |\Delta Z_n|^2 dt \Big] \\ 
       & \leq C\Delta t + C \max_{0 \leq n \leq N-1}\E |\Delta Y_n|^2+CE[\int^T_0 |\Delta Z_n|^2 dt \Big] 
    \end{align*}
where we used \eqref{y_bound} to bound $\max_{0 \leq n \leq N-1}\E[|\Delta Y_n|^2]$.
Note that by Theorem \ref{standard1}, one has that 
\begin{align}
    E[\int^T_0 |\Delta Z_n|^2 dt \Big] & \leq 2 E[\int^T_0 |Z_{t}-Z_{t_n} |^2 +|Z_{t_n}-Z^{N,Sig}_{t_n}|^2 dt \Big] \nonumber \\ 
    &\leq 4 \E[\int^T_0 |Z_{t}-Z_{t_n} |^2 +|Z_{t_n}(X^{N}_{t_n})-Z_{t_n}(X_{t_n})|^2 + |Z_{t_n}(X^{N}_{t_n})-Z^{N,Sig}_{t_n}|^2 dt \Big] \nonumber \\ 
    & \leq C\Delta t + \sum_{0 \leq n \leq N-1} \E[|Z_{t_n}(X^{N}_{t_n})-Z^{N,Sig}_{t_n}|^2] \Delta t \label{deltaZ_est}
\end{align}
Note that $Z_{t_n}(X^{N}_{t_n})$ can be approximated by $Z^{N,Sig}_{t_n}$ arbitrarily close given the same underlying path $\lbrace X^{N}_{t_n} \rbrace_{1 \leq n \leq N-1}$ due to the universal non-linearity of signature path. As such, inequality \eqref{var_Y_upper} holds. 
\end{proof}

By putting together Theorem \ref{var_upper} and Theorem \ref{theorem_yz} we have the following result, which then shows convergence by universality property of the path signature.
\begin{theorem} \label{backward_theorem_vf}
     Under \textit{Assumption 1} and \textit{Assumption 2}, the following inequality holds 
    \begin{align}
        \sup_{t \in[0,T]} \E[|Y_t - Y^{N, Sig}_{t_n}|^2]+ \int^T_0 \E[|Z_t-Z^{N, Sig}_{t_n}|^2 dt] \leq C(\Delta t+\sum_{0 \leq n \leq N-1} \E[|Z_{t_n}(X^N_{n})- Z^{N,Sig}_{t_n}|^2] \Delta t)
    \end{align}
\end{theorem}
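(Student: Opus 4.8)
The plan is to simply chain the two estimates that have already been established. Theorem~\ref{theorem_yz} gives
\[
\sup_{t\in[0,T]}\E[|Y_t-Y^{N,Sig}_{t_n}|^2]+\int^T_0\E[|Z_t-Z^{N,Sig}_{t_n}|^2\,dt]\le C\big(\Delta t+\text{Var}(Y^{N,Sig}_0)\big),
\]
so the only remaining task is to control $\text{Var}(Y^{N,Sig}_0)$, and that is exactly the content of Theorem~\ref{var_upper}, which yields
\[
\text{Var}(Y^{N,Sig}_0)\le C\Big(\Delta t+\sum_{0\le n\le N-1}\E[|Z_{t_n}(X^N_n)-Z^{N,Sig}_{t_n}|^2]\,\Delta t\Big).
\]
Substituting the second bound into the first, and collecting the generic constant $C$ (which remains independent of $N$ and of the mesh size throughout), gives the claimed inequality at once.

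Two points deserve a line of justification. First, one should check that the constants produced by the two theorems are genuinely uniform: the denominator $1-6T^2K_y$ appearing in \eqref{y_bound} and the factor $1-\epsilon-2TK_z/\epsilon+\dots$ appearing in the lower-variance bound \eqref{lower_var} are bounded away from zero precisely because of the fourth part of \textit{Assumption 1} (smallness of $K_y,K_z$), together with the choice $\epsilon=\sqrt{2TK_z+4T^2K_y\cdot 3(1+TK_y)/(1-6T^2K_y)}$ which forces $2\epsilon<1$; since this is already handled inside Theorems~\ref{theorem_yz} and \ref{var_upper}, here I only need to invoke them. Second, the right-hand side term $\sum_n\E[|Z_{t_n}(X^N_n)-Z^{N,Sig}_{t_n}|^2]\,\Delta t$ is not itself $O(\Delta t)$ a priori, but it can be made arbitrarily small by the universal nonlinearity of the path signature: the estimators $Z^{N,Sig}_{t_n}=Z_n^{\theta_n}(\pi_m(Sig(\tilde X^{\tilde N}_{0:n\Delta t})))$ are linear-on-signature approximations of the continuous path functional $X^N\mapsto Z_{t_n}(X^N_{t_n})$ evaluated along the fixed discretized paths, so combined with the explicit rate $C\Delta t$ this delivers genuine convergence.

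There is essentially no obstacle in this final step — it is a one-line substitution. The real work has already been distributed into Theorem~\ref{standard1} (regularity of $(Y,Z)$), Theorem~\ref{theorem_yz} (propagating $\text{Var}(Y^{N,Sig}_0)$ into a bound on the $(Y,Z)$ errors via a discrete Gronwall argument and the smallness of $K_y,K_z$), and Theorem~\ref{var_upper} (bounding that variance by the signature approximation error). If I were to reprove the statement from scratch I would re-derive those two theorems and then perform the substitution, taking care only to keep the chain of constants explicit enough that the reader sees they do not blow up as $N\to\infty$.
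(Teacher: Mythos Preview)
Your proposal is correct and matches the paper's own argument: the paper simply states that the result follows by combining Theorem~\ref{theorem_yz} with Theorem~\ref{var_upper}, exactly as you do by substituting the variance bound into the error estimate. Your additional remarks on the uniformity of the constants and the role of the universal nonlinearity are more detailed than what the paper provides, but the underlying approach is identical.
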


\subsection{Non-Markovian Generalization}
We have provided the proof for the Markovian case. We now consider the case for Non-Markovian FBSDE, i.e. both the driver and the terminal function $g$ depend on the entire path of the process $X_{[0,t]}$. For simplicity we still consider dimension $d=1$. We introduce the following notation for the path analysis which are mainly from Dupire \cite{dupire}. 

For each $\omega \in \Omega$, $X: \Gamma \rightarrow \bR$ is the canonical process: $X_t(\omega):=\omega_t$. 
Let $|\cdot|$ denote the norm. For each $0\leq t \leq t' \leq T$, define $\Lambda:= \underset{0 \leq t \leq T} \bigcup \Lambda_t$ where $\Lambda_t: [0,t] \rightarrow \bR$ is the path. We write: 
\begin{align*}
    ||\omega_t||&:= \sup_{r \in [0,t]}|\omega(t)| \\ 
    d_{\infty}(\omega_t, \omega'_{t'})&:=\max ( \sup_{r \in [0,t)} \lbrace |\omega(r)-\omega'(r)|  \rbrace, \sup_{r \in [t,t']} \lbrace |\omega(t)-\omega'(r)|  \rbrace )+|t-t'|
\end{align*}
In Dupire's formulation of derivatives one often regard $u(\omega_t)$ as a function of $t, \omega, x$
\begin{align}
    u(\omega^x_t):=u(t,\omega(s)_{0 \leq s < t}, \omega(t)+x)
\end{align}
Definition of spatial derivative is given below
\begin{definition}
    Let $u: \Lambda \rightarrow \bR$ and $\omega_t \in \Lambda$, if there exists $p \in \bR^d$ such that 
    \begin{align}
        u(\omega^x_t)=u(\omega_t)+p\cdot x+o(|x|), \ \ x \in \bR
    \end{align}
then we say that $u$ is vertically differentiable at $\omega_t$ and denote the gradient of $D_x u(\omega_t)=p$. $u$ is said to be vertically differentiable in $\Lambda$ if $D_x u(\omega_t)$ exists for each $\omega_t \in \Lambda$. We also define the Hessian $D_{xx}u(\omega_t)$. In a similar fashion, the hessian is an $\mathbf{S}(d)$-valued function defined on $\Lambda$, where $\mathbf{S}(d)$ is the space of $d\times d$ symmetric matrices. 
\end{definition}
For the horizontal derivative, we first define $\omega_t \in \Lambda$, we denote 
\begin{align}
    \omega_{t,s}(r)=\omega(r)\mathbf{1}_{[0,t)}(r)+ \omega(t)\mathbf{1}_{[t,s]}(r), \ \ r \in [0,s]
\end{align}
\begin{definition}
    For a given $\omega_t \in \Lambda$ if we have 
    \begin{align}
        u(\omega_{t,s}) = u(\omega_t) + a (s-t) + o(|s-t|), \ \ s\geq t
    \end{align}
    then we say that $u(\omega_t)$ is horizontally differentiable in $t$ at $\omega_t$ and denote $D_tu(\omega_t)=a$. $u$ is said to be horizontally differentiable in $\Lambda$ if $D_tu(\omega_t)=a$ exists for all $\omega_t \in \Lambda$. 
\end{definition}
By defining $u(t,X_t):=Y_t$ we have the nonlinear Feynman-Kac formula from Proposition 3.8 \cite{peng} that by denoting $X_t$ as the path of $X_{. \wedge t}$ and $x_t$ is the spatial value at time $t$.  
\begin{align}
    Z_t=D_x u (t,X_t) \sigma(t,x_t) := F(t, X_t)
\end{align}
We make the following strong assumption on $u(t,X_t)$ and $F(t,X_t)$: 
\begin{assumption}
    Both $u$ and $F$ are  Hölder-$\frac{1}{2}$ continuous in $t$ and they are uniformly Lipschitz continuous in $X$ with Lipschitz constant L.
    Namely, for $f= u , F$
    \begin{align}
        |f (t,X_t)-f(t',X_{t})| &\leq C |t-t'|^{\frac{1}{2}} \\
        |f (t,X_t)-f(t,X'_{t'})| &\leq L d_{\infty}(X_t,X'_{t'})
    \end{align}
In addition to the Assumption 1, assume the driver of the BSDE is also Hölder-$\frac{1}{2}$ continuous in $t$ and it is uniformly Lipschitz continuous in $X$ with Lipschitz constant L in the sense above.
\end{assumption}

One immediate consequence of this assumption is that Theorem \ref{standard1} holds also in the non-markovian setting. 
\begin{align}
    \int^{t_{n+1}}_{t_n}\E[|Z_t-Z_{t_n}|^2]dt &=\int^{t_{n+1}}_{t_n}\E[|F(t,X_t)-F(t_n,X_{t_n})|^2]dt  \nonumber \\ 
    & \leq \int^{t_{n+1}}_{t_n} C(\Delta t + d^2_{\infty}(X_t,X_{t_n}) ) dt \nonumber \\ 
    & \leq \int^{t_{n+1}}_{t_n} C(\Delta t + \E[\sup_{t_n \leq t \leq t_{n+1}}|X_{t}-X_{t_n}|^2 ]) dt \\ 
    & \leq \int^{t_{n+1}}_{t_n} C \Delta t dt
\end{align}
This will imply that 
$$ \sum^{N-1}_{n=0} \E[\int^{t_{n+1}}_{t_n} |Z_t - Z_{t_n}|^2 dt ] \leq C \Delta t $$ 
In a similar fashion one can show that 
$$ \max_{0 \leq n \leq N} \E[ \sup_{t_n \leq t \leq t_{n+1}}|Y_t-Y_{t_n}|^2] \leq C \Delta t$$
This shows that the standard Theorem also holds for the path dependent case. 

As a result, by the same argument as in the proof for Lemma \ref{forward_z_lemma} the Non-Markovian version of the  Lemma holds. 

For a non-Markovian version of the theorem as Theorem \ref{forward_main_theorem_markovian}, we note that for the driver of the BSDE, the randomness in the driver is introduced through the entire path of $X_t$ instead of just the spatial value $x_t$. We study the difference between the dependency of the driver $f(t,\cdot, Y, Z)$ on $X_t$ which is the exact dynamics (path) of the SDE and the discrete approximation of the path: $ \lbrace X^N_{t_n} \rbrace_{n=0,..,N}$ which is a piecewise constant function. For $t \in [t_n,t_{n+1}]$
\begin{align}
    \E[|f(t,X_t,Y_t,Z_t)-f(t,X^N_{t_n},Y_t,Z_t)|^2] & \leq \E[d^2_{\infty}(X_t,X_{t_n}) ] \nonumber \\ 
    & \leq \E[\sup_{t_n \leq t \leq t_{n+1}}|X_{t}-X_{t_n}|^2] \nonumber \\ 
    & \leq C\Delta t 
\end{align}
As such, by using similar argument, one can show that the non-Markovian version of Theorem \ref{forward_main_theorem_markovian} and Theorem \ref{backward_theorem_vf} also hold. 

\section{Numerical Examples}
In this section, we provide numerical examples and compare the results obtained using our algorithms to those from the literature \cite{qifeng1}, \cite{qifeng2} and \cite{come1}. For all the numerical examples, we generate new batch samples (data) for each iteration. This is different from examples from \cite{qifeng1} in the reference, where a fixed amount data was pre-generated, say $10^5$ trajectories. The main motivation for us to use new samples for each iteration is 1) we want to avoid over fitting. 2) the data generation process is independent from the training process and it is easy to implement. We comment that the we will generate the path signature for $\lbrace X_{\tilde t_j} \rbrace^B_{0  \leq \tilde t_j \leq  t_n}$ for each $n=0,1,...N-1$, and $l \in B$ where $B$ is the batch. And one can use Chen's identity for the path generation to avoid regeneration of the signatures for the overlapping parts in the paths. 

\subsection{Lookback Options}
Consider the Blackscholes seting where the stock price follows the following dynamics
\begin{align}
    dX_t=r X_t dt + \sigma X_t dW_t 
\end{align}
The terminal payoff is defined to be 
\begin{align}
    g(X_{\cdot \wedge T}) = X_T -\inf_{0\leq t \leq T} X_t
\end{align}
The option price is defined to be 
\begin{align}
    Y_t=\exp(-r(T-t)) \E[ g(X_{\cdot \wedge T})|\calf_t]
\end{align}
Then $e^{-rt}Y_t=\E[ e^{-rT}g(X_{\cdot \wedge T})|\calf_t] $ is a martingale. Meaning that 
\begin{align}
    d(e^{-rt}Y_t)= Z'_t dW_t
\end{align}
for some square integrable process $Z'_t$, then one immediately has 
\begin{align}
    dY_t = rY_t dt + Z_t dW_t
\end{align}
where $Z_t:= e^{rt} Z'_t$. 
The solution $Y_t$ has the following analytic formula
\begin{align}
    Y_t=X_t\Phi(p_1)-m_t e^{-r(T-t)} \Phi(p_2) -X_t \frac{\sigma^2}{2r}\big( \Phi(-p_1)  -e^{-r(T-t)}(\frac{m_t}{yt})^{2r/\sigma^2} \Phi(-p_3) \big)
\end{align}
where 
$$m_t:=\inf_{0\leq u\leq t} X_t, p_1=\frac{\log(X_t/m_t)+(r+\sigma^2/2)(T-t)}{\sigma\sqrt{T-t}}, p_2=p_1-\sigma \sqrt{T-t}, p_3=p_1-\frac{2r}{\sigma}(\sqrt{T-t})$$

For the model specification, similar to \cite{qifeng1}, we use $\tilde N = 2000$ and $N=20$. We use the truncated signature method with level $m=3$. 
We take $x_0=1, \sigma=1, r=0.01, T=1$. Since in \cite{qifeng1} it is demonstrated that the method therein is state of the art, we present comparison between results obtained using methods in \cite{qifeng1} and our forward/backward methods in table \ref{eg1_table}. 

For the forward algorithm (method 1), we take batch size to be 100. And we use the Adam optimizer with learning rate $10^{-3}$. since $N=20$, we use 20 individual neural network to approximate $Z_{t_n}$ for each $0\leq n \leq N-1$. We use fully connected feedforward neural networks with 2 hidden layers each of 64 neurons. 
It can be observed from Figure \ref{fig: eg1_m1} that our forward algorithm produces good result compared to the exact solution: the blue curve is very close to the red dashline. 

For the backward algorithm (method), we note that due to the designed model methodology, it is desirable to use a larger batch size: we are minimizing the variance of the mini-batches and use the mean of the batch samples as the sample estimate for $Y_0$. We observe that for this method, convergence happens  fast but with values typically fluctuating about a fixed level. This is due to the fact that 
\begin{itemize}
    \item We are not training the model on a fixed amount of trajectories of $\lbrace X_{\tilde t_j} \rbrace^M_{0 \leq j \leq \tilde N}$ where M is a fixed amount (say 10000). Instead, to overcome over-fitting, we generate different batch samples for each iteration. Hence, one trajectory of sample path may leads to high variance. 
    \item  We minimize the variance of the sample and use the mean of the sample as estimation for $Y_0$, so the estimated mean may fluctuate according to sample size. 
\end{itemize}
For this numerical example, we use $B=1000$ as the batch sample size. We perform 50 runs of the algorithm and obtain mean of $5.78$. The confidence interval is $[5.77,5.79]$. As such, even though the estimated $Y_0$ for each iteration shows some fluctuation in the sample run, the mean is stable.  We show the result of one the sample run in Figure \ref{fig: eg1_m2}. 

As such, we observe that both methods are comparable to the existing in \cite{qifeng1}.

\begin{figure}[!htb]
\center
    \includegraphics[width=0.6\textwidth]{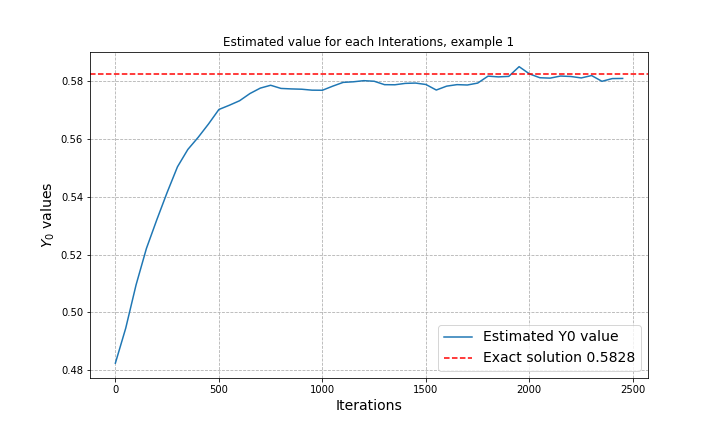}
    \caption{Predicted $Y_0$ value versus the number of interations trained.}
    \label{fig: eg1_m1}
\end{figure}

\begin{figure}[!htb]
\center
    \includegraphics[width=0.6\textwidth]{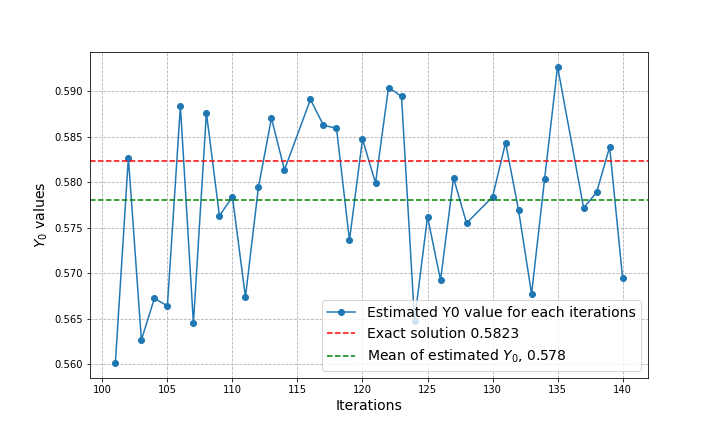}
    \caption{Predicted $Y_0$ value versus the number of interations trained.}
    \label{fig: eg1_m2}
\end{figure}

\begin{table}
\caption{Example 1 results comparison}	
\label{eg1_table}
\begin{center}
\begin{tabular}{ |c|c|c|c|c|c|} 
\hline
 & Exact & Method in \cite{qifeng1}& Forward method 1 & Backward method 2   \\
\hline
Result $Y_0$ & 5.828  & 5.79 & 5.81  & 5.78  \\
\hline
Error & -- & 0.6 \%& 0.3 \% & 0.7\% \\
\hline
\end{tabular}
\end{center}
\end{table}

\subsection{A Higher dimension example}
The last example has dimension $d=1$, for this particular example, we take $d=20$. We consider the following high dimension problem 
\begin{align}
\label{fbsde_eg2}
	\begin{cases}
		\int^T_t dX_s = \int^T_t dW_s \\
		Y_t=g(X_{\cdot \wedge T})+ \int^T_t f(s, X_{\cdot \wedge s}, Y_s, Z_s)ds -\int^T_t Z_s dW_s \\ 
	\end{cases}
\end{align}
For simplicity, we take $f=0$ and $g(X_{\cdot \wedge T})= (\int^T_0 \sum^{d}_i X^i_s ds)^2 $. 
We comment that because the dimension of the signature grows exponentially as function of the dimension of the underlying state process, we will use log-signature of paths instead of signatures since it essentially contains the same amount of information but requires less dimensions. We take $d=20, T=1, N=5, \tilde N=100$. 
Again, it is observed that the results of our forward method 1 and backward method 2 are comparable to the Exact solution and the method proposed in \cite{qifeng1}. We comment that the convergence of the forward method can be slow in this case as it could be sensitive to the initial guess of $Y_0$. But determination of a rough range of $Y_0$ is easy: try running the algorithm with guess $Y^1_0$, if the estimated result is monotonically increasing during the training process, then stop the algorithm and make a larger guess $Y^2_0$. Keep increasing the initial guess if it is still increasing otherwise make a smaller guess. Using this methodology, we start training our algorithm with $Y_0=6.0$ and it stabilizes at 6.58. Method 2 converges rapidly but there is some fluctuation among each iteration within one run. Again, we use the mean estimator and we observe the result is 6.71. The test result for $d=20$ is summarized in Table \ref{eg2_table}. 
\begin{table}
\caption{Example 2 result comparison, $d=20$}	
\label{eg2_table}
\begin{center}
\begin{tabular}{ |c|c|c|c|c|c|} 
\hline
 $d=20$& Exact & Method in \cite{qifeng1}& Forward method 1 & Backward method 2   \\
\hline
Result $Y_0$ & 6.66  & 6.6 & 6.58  & 6.71  \\
\hline
Error & -- & 1 \%& 1.2 \% & 0.75\% \\
\hline
$d=100$ \\
\hline
Result $Y_0$ & 33.33  & -- & 33.15  & 33.50  \\
\hline
Error & -- & NA & 0.538 \% & 0.534\% \\
\hline
\end{tabular}
\end{center}
\end{table}
We also present the result when the dimension is very high $d=100$ in which case the method in \cite{qifeng1} is no longer practical. In this case, we construct a (trainable) embedding layer whose output data stream has state dimension 5. Since the dimension is reduced, we can take larger $\tilde N$ for the accuracy purpose in which case is selected to be 10.

We present the PDE related to \eqref{fbsde_eg2} and state its exact solution for completeness. For $(t,\omega) \in \big( [0,T] \times C([0,T],\bR^d) \big)$ 
\begin{align}
    \begin{cases}
        \partial_t u + \frac{1}{2}tr(\partial_{\omega \omega} u) + f(t, \omega, u \partial_{\omega} u)=0 \\
        u(T,\omega)=g(\omega), \ \ g(\omega)= (\int^T_0 \sum^d_i \omega^i_s ds)^2
    \end{cases}
\end{align}
The exact solution is 
\begin{align*}
    u(t,\omega)=(\int^T_0 \sum^d_i \omega^i_s ds)^2 + (\sum^d_i \omega^i_t)^2(T-t)^2+2(T-t)(\sum^d_i \omega^i_t)\int^t_0 \sum^d_i \omega^i_s ds + \frac{d}{3}(T-t)^3
\end{align*}

\subsection{An example of non-linear type: Amerasian Option}
As the last example, we provide an example used in \cite{qifeng2}. We note that in this case, our forward algorithm 1 is not applicable anymore. We will consider only the backward algorithm according to \eqref{reflected_num}. 

Amerasian option is considered under the Black-Scholes model that involves $d$ stocks $X_1,...X_d$ which follows the following SDEs: 
\begin{align}
    dX^i_t=r X^i_t dt + \sigma_t^idW^i_t, \ X^i_0=x^i_0, i=1,...,d
\end{align}
where the $W^i$ are assumed to be independent. The payoff of the basket Amerasian call option at  strike price of $K$ is defined as 
\begin{align}
    g(X_{\cdot \wedge T})= \Big( \sum^d_{i=1} \frac{w_i}{T} \int^T_0 X^i_t dt -K\Big)^+
\end{align}
where $w_i$ are defined to be the weights. We consider the price of the Bermudan option which is a type of American option where early excercise can only happen at prescribed dates. The model variables are taken to be $X^i_0=100, r=0.05, \sigma_i=0.15, \omega =\frac{1}{d}, T=1, K=100$. We take $\tilde N =1000, N=20$. 
We show the benchmark results against 1) The European price. 2) The method from \cite{qifeng2} and 3) method from \cite{come1}. 
On the analytic side for benchmark purposes, we comment that: 
\begin{enumerate}[i.]
    \item The American option price should be higher than the European price due to its flexibility of being able to exercise early. 
    \item By Jensen's inequality, using the current parameters
    \begin{align*}
        \E[e^{-rT} \Big( \sum^d_{i=1}\frac{1}{dT} \int^T_0 X^i_t dt -K\Big)^+] \geq e^{-rT}\Big( \E[\frac{1}{dT} \sum^d_{i=1}  \int^T_0 X^i_t dt -K]\Big)^+=2.42
    \end{align*}
    \item Again, we can show that the price of the Ameransian option price is \textit{decreasing}. By using the Jensen inequality we have 
    \begin{align}
        e^{-rT}\Big( \frac{1}{2dT} \sum^{2d}_{i=1}  \int^T_0 X^i_t dt -K\Big)^+ \leq \frac{1}{2} e^{-rT}\Big( \frac{1}{dT} \sum^{d}_{i=1}  \int^T_0 X^i_t dt -K\Big)^+ + \frac{1}{2}e^{-rT}\Big( \frac{1}{dT} \sum^{2d}_{i=d+1}  \int^T_0 X^i_t dt -K\Big)^+  \nonumber
    \end{align}
    Then taking the limit in $d$ we obtain that the price converges to 2.42 when $d$ goes to infinity.  
\end{enumerate}

We show our results in Table \ref{eg3_table1}. We run the algorithm for dimensions $d=1,5,10,20,100$ for 50 runs. The mean of the 50 estimates is taken as the estimator and the confidence intervals are computed based on those samples.

\begin{table}
\caption{Example 3 result comparison $d=1$}	
\label{eg3_table1}
\begin{center}
\begin{tabular}{ |c|c|c|c|c|c|} 
\hline
 d=1& European & Method in \cite{qifeng2} & Method in \cite{come1} & Our Backward method  \\
\hline
Result $Y_0$ & 4.732  & 4.963& 5.113  & 5.03 \\
\hline
Confidence Interval & -- & [4.896,5.03] & [5.009,5.217]  & [4.97, 5.10]\\
\hline
 d=5         \\
\hline
Result $Y_0$ & 3.078  & 3.190 & 3.335  &  3.11 \\
\hline
Confidence Interval & -- & [3.115, 3.266] & [3.207, 3.462]  & [3.08,3.155] \\
\hline
 d=10  \\
\hline
Result $Y_0$ & 2.701  & 2.914 & 3.142 & 2.76 \\
\hline
Confidence Interval & -- & [2.844,2.983] &[2.975,3.309]   &[2.734, 2.813] \\
\hline
 d=20\\
\hline
Result $Y_0$ & 2.51  & 3.093 & 3.095 & 2.61 \\
\hline
Confidence Interval & -- & [3.017,3.168] &[2.883,3.3308]   & [2.587, 2.627] \\
\hline
\end{tabular}
\end{center}
\end{table}

We also note that our results agree with the theoretical findings. That is, compared to the benchmarks, the calculated prices are all above the European prices. We comment that each run takes much less time than reported in \cite{qifeng2}; For example, when $d=5$ the runtime (one run) of our algorithm takes only 369.47s to achieve the estimate while in \cite{qifeng2}, 1927.55s is reported. 
In the high dimension case where $d=100$, the price is found to be $2.516$ with confidence interval $[2.506,2.526]$. We note that in this case, the method in \cite{qifeng2} fails due to the excessively large size of both the signature and path signatures in this case.

In the meantime, Observing the trend in the price predicted, we note that our predicted price agree with the trend better than \cite{qifeng2}: when $d=20$ their predicted price even increased compared to when $d=10$, which does not align with theoretical result. We comment that when using the embedding layers, one run for $d=20$ reduces to 165.0s and for $d=100$ one run takes about 320.0s.

\clearpage
\bibliographystyle{apacite}
%\newpage

\end{document}